\DeclarePairedDelimiter\floor{\lfloor}{\rfloor}
\newtheorem{theorem}{Theorem}[section]
\newtheorem{lemma}[theorem]{Lemma}
\newcommand{\TheTitle}{On Control of Epidemics with Application to
	COVID-19}
\title{{\TheTitle}
}
\author{
	Chung-Han Hsieh\thanks{
		E--mail: \mbox{\href{mailto:chunghan.hsieh@wisc.edu}{chunghan.hsieh@wisc.edu}}.}
}
\date{}
\begin{document}

\maketitle

\begin{abstract}
	At the time of writing, the ongoing COVID-19 pandemic, caused by severe acute respiratory syndrome coronavirus 2 (SARS-CoV-2), had already resulted in more than thirty-two million cases infected and more than one million deaths worldwide.
	Given the fact that the pandemic is still threatening health and safety, it is in the urgency to understand the \mbox{COVID-19} contagion process and know how it might be controlled. 
	With this motivation in mind, in this paper, we consider a version of stochastic discrete-time  Susceptible-Infected-Recovered-Death~(SIRD)-based epidemiological model with two uncertainties: The uncertain rate of infected cases which are undetected or asymptomatic, and the uncertain effectiveness rate of control. 
	Our aim is to study the effect of an epidemic control  policy on the uncertain model in a control-theoretic framework.  
	We begin by providing the closed-form solutions of states in the modified SIRD-based model such as  \textit{infected cases}, \textit{susceptible cases}, \textit{recovered cases} and \textit{deceased cases}. 
	Then, the corresponding expected states and the technical lower and upper bounds for those states are provided as well. 
	Subsequently, we consider two epidemic control problems to be addressed: One is \textit{almost sure epidemic control problem} and the other \textit{average epidemic control problem}. 
	Having defined the two problems, our main results are a set of sufficient conditions on a class of linear control policy which assure that the epidemic is ``well-controlled"; i.e., both of the infected cases and deceased cases are upper bounded uniformly and the number of infected cases converge to zero asymptotically.
	Our numerical studies, using the historical COVID-19 contagion data in the United States,  suggest that our appealingly simple model and control framework can provide a reasonable epidemic control performance compared to the ongoing pandemic~situation.
\end{abstract}



\section{Introduction} \label{Section: Introduction}
At the time of writing, according to the World Health Organization, the ongoing COVID-19 pandemic, caused by severe acute respiratory syndrome coronavirus 2 (SARS-CoV-2), had already resulted in more than thirty-two million cases infected and more than one million deaths worldwide; see~\cite{WHO}, and even worse, the pandemic seems to be ``showing no clear signs of slowing down" in many countries.
To this end, governments are persistently striving to control and  slow down the spread of \mbox{COVID-19}; e.g., minimizing the contact rate by consulting non-pharmaceutical intervention mechanisms such as lock-downs and social distancing or relying on some pharmaceutical interventions such as enhancing medical treatments, and developing possible cure or remedies. 
With this motivation in mind, the focal point for this paper, as a preliminary work,  is to understand how the disease spreads and how it would be possibly~controlled.

In the existing literature, many mathematical epidemiological models are proposed for modeling the virus transmission and evolution of the epidemic.
For example,  the seminal continuous-time SIR compartmental model,  introduced in \cite{kermack1927contribution}, divides 
the individuals into three disjoint classes: the susceptible S class, the infectious I class, and the recovered R class. 
After that, various extensions and ramifications to this model are proposed which include considerations
such as death due to the epidemic, the delay time for infected individuals to become
infectious, quarantining individuals, and spatial effects; e.g., see~\cite{anderson1992infectious, hethcote2000mathematics, bailey1975mathematical, tornatore2005stability}.
In contrast to the continuous-time model, the discrete-time counterpart for the SIR model and its modifications such as SI and SIS model are analyzed in~\cite{allen1994some}.
A stochastic version of the SIR model and its stability is discussed in~\cite{tornatore2005stability}. 
In addition, a comprehensive review along this line of research can be found in~\cite{hethcote2000mathematics}. 
To close this brief literature survey,  we mention  \cite{stewart2020control, burke2020data, alamo2020data,kohler2020robust, calafiore2020modified} which rely on  control-theoretic approaches to mitigate the epidemics.

Consistent with the existing literature, in this paper, we consider a version of the stochastic discrete-time  Susceptible-Infected-Recovered-Death~(SIRD)-based epidemiological model with uncertainties. 
We note that two uncertain quantities are included in our modified SIRD model; i.e., 
the \textit{uncertain rate  of   infected cases which are undetected or asymptomatic} and the \textit{uncertain effectiveness rate of control}.
The necessity of the uncertain rate  of  infected cases can be argued by the following observations:
While the number of infected cases at a given time is crucial, in practice, the \textit{actual} number
of infected cases often remain unknown --- only the number of infected individuals that have been
detected as ``positive" is available.  
Said another way, many infected individuals are indeed undetected;~see \cite{li2020substantial, chen2020time}.
To this end, a typical assumption used in the literature using SIR-based model is that the infected cases are the \textit{actual} infected ones. 
However, this assumption is arguably unrealistic and may lead to erroneous conclusions; e.g., see~\cite{calafiore2020modified,burke2020data}, which justifies the needs for introducing the uncertain rate  of   infected cases into our modified SIRD~model.
To see why the second uncertain quantities ``effectiveness rate of control" is also needed, we argue as follows:
Suppose a government already announced a social distancing or lock-down policy, there might exist some people who are not disciplined or may be reluctant to follow the medical advise for wearing the masks or social distancing. 
In addition, it might be the case that the cure being used is not very effective or only work for a certain group of people. 
These considerations suggest the needs for introducing the uncertain effectiveness on control policy into our model.
It is also worth mentioning that such a consideration, while is important, seems not appear to have been heavily considered in the existing~literature. 

Unlike many COVID-19 related papers aimed at predicting the behavior of the pandemic disease, in this preliminary work, we formulate two epidemics ``control" problems.  That is, we aim at studying the effect of an ``abstract" epidemic control policy on the model in a control-theoretic framework. The word ``abstract" used above is to refer to the fact that we do not characterize specifically how to develop the medical treatments or when to lock-down using our control policy. Instead, we show the cause and effect when the policy is applied and how much it may affect the disease.
To this end, we provide useful analytic expressions for all of the \textit{states}; i.e., textit{infected cases}, \textit{susceptible cases}, \textit{recovered cases} and \textit{deceased cases}, in our modified SIRD-based model.
Furthermore, we show a set of sufficient conditions on a class of linear control policy which assure that the epidemic is ``well-controlled" in the following sense: Both of the infected cases and deceased cases are upper bounded uniformly, and the ratio of infected cases must go down to zero asymptotically.
Our numerical studies, using the historical~COVID-19 contagion data in the United States,  suggest that our appealingly simple model and control framework can provide a reasonable performance compared to the ongoing pandemic~situation.

\subsection{Organization of the Paper}
The remainder of the paper is organized as follows: In Section~\ref{Section: Preliminaries and Problem Formulation} we describe the preliminaries involving the details of our modified SIRD-based epidemiological model. 
Then two problem formulations for epidemic control: \textit{Almost sure epidemic control problem} and \textit{average epidemic control problem}  are stated. 
Subsequently, in
Sections~\ref{Section: Analysis of Dynamics},  detailed analyses on our epidemiological model are provided.
In Sections~\ref{Section:Control of Epidemics in Almost Sure Sense} and \ref{Section: Control of Epidemics in Expected Value}, using a \textit{linear} epidemic control  policy, we provide sufficient conditions on feedback gain so that the epidemic can be properly controlled in either expected value sense or almost sure sense. 
Next, in Section~\ref{Section: Examples Using Historical Data: Taiwan and United State}, we describe a simple model parameter estimation approach and illustrate its use via numerical examples involving historical COVID-19 data
from the United States, the country among those with a relatively high confirmed cases
in the middle of~2020. 
The control performance is also discussed.
Finally, in Section~\ref{Section: Concluding Remarks}, we provide some concluding remarks and promising directions for future research.

\section{Preliminaries and Problem Formulation} \label{Section: Preliminaries and Problem Formulation}
In the sequel, we take $k$ to be the index indicating the day number and $u(k) \geq 0$ to be the corresponding abstract \textit{epidemic control  policy} implemented by governments.\footnote{Again, as mentioned in Section~\ref{Section: Introduction}, the word ``abstract" is to indicate that we do not characterize specifically how to develop the medical treatments or when to lock-down using our control policy. Instead, we show the cause and effect when the policy is applied and how much it may affect the disease. }
We view that the large values of~$u(k)$ correspond to enhanced medical treatments or  stringent non-pharmaceutical interventions such as mandating social distancing with wearing the masks. The smaller values of $u(k)$ might correspond to diminished medical treatments or relaxation of the rules. In the analysis to follow, we assume that the epidemic control  policy $u(k)$ is \textit{causal}; i.e., it only depends on the past and current information that is available at hand---not the future.

\subsection{Epidemiology Model with Uncertainties} For $k=0,1,\dots,$ let  $S(k)$ be the number of \textit{susceptible cases} at the $k$th day,  $I(k)$ be the \textit{infected cases}  at the~$k$th day and $R(k)$ be the \textit{recovered cases} at the~$k$th day, and~$D(k)$ be the \textit{deceased cases} at the~$k$th day.	
Take $N(k)$ be the underlying human population at the $k$th day satisfying $N(k) = S(k)+I(k)+R(k)$.
As mentioned in Section~\ref{Section: Introduction}, many infected individuals may go undetected or asymptomatic~\cite{li2020substantial} and the effectiveness of a control policy $u(k)$ may be uncertain at the time, we introduce two uncertainty quantities: The uncertain \textit{rate  of   infected cases which are undetected or asymptomatic},  call it~$\delta(k)$, and the uncertain \textit{effectiveness rate of control}, call it~$v(k)$, in our model to follow.
Now, with initial values  $S(0):=N(0):=S_0$, $I(0):=I_0>0$,  and  $R(0)=D(0)=0$, we consider a discrete-time stochastic epidemiological model with uncertainties described~by
\begin{align}\label{eq: infected cases dynamics}
	\begin{cases}
		S(k+1) = S(k)-\delta(k)I(k);\\
		I(k+1) =  (1+\delta(k)) I(k)   -  v(k)u(k)- d_I(k)I(k);   \\
		R(k+1)  = R(k) + v(k)u(k); \\
		D(k+1) = D(k)  + d_I(k) I(k)\\
	\end{cases}
\end{align}
where $d_I(k)$ is the \textit{death rate} for infected cases satisfying $d_I(k)  \in [0,d_{\max}]$ with \mbox{$d_{\max}<1$} for all $k$. 
We assume that the sequence $d_I(k)$ are independent and identically distributed~(i.i.d.) random variables.
In addition, we assume that both of $\delta(k)$ and $v(k)$ are i.i.d. random variables with arbitrary distribution but known bounds $0\leq \delta(k) \leq \delta_{\max}$ and $0<v_{\min} \leq v(k) \leq v_{\max} \leq 1$, respectively.\footnote{Technically speaking, we require that $\delta_{\max}$ is in the support of $\delta(k)$  and  $d_{\max}$ is in the support of~$d_I(k)$. Lastly, the bounds $v_{\min}$ and $ v_{\max}$ are in the support for $v(k)$.}
If $v(k) = 1$, it corresponds to the case where the control policy is extremely effective. On the other hand, if $v(k) \approx 0$, it corresponds to an extremely ineffective case; i.e., the people are not disciplined and may be unwilling to follow the medical advise for masks or social distancing; see also a discussion in Section~\ref{Subsection: What if No or Extremely Ineffective Control}.  
In the sequel, we assume further that $I_0\delta_{\max} < S_0$; and $v(k)$, $\delta(k)$, and $d_I(k)$ are mutually independent.
For the sake of notational convenience, we also take the shorthand notations $\overline{\delta}:=\mathbb{E}[\delta(k)]$, $\overline{d_I}:=\mathbb{E}[d_I(k)]$ and $\overline{v}:=\mathbb{E}[v(k)]$ for all~$k$.

\vspace{3mm}
\textbf{Remark on the Epidemiology Model.} As a preliminary work,  while we allow uncertainties on both infected cases $I(k)$ and control $u(k)$, our epidemiological model~\ref{eq: infected cases dynamics} assumes that $u(\cdot)$ can interact with the infected cases instantaneously without any time delay. The action with delay is left for future work; see also Section~\ref{Section: Concluding Remarks}. 
It is also worth mentioning that, if one considers a linear control policy $u(k)=K I(k)$ with constant $K\geq 0$,  then a \textit{basic reproduction ratio} of sorts, call it $R_0(\cdot)$, can be obtained as follows: For $k\geq 0,$
$$
R_0(k) := \frac{\beta(k)}{\gamma(k)+d_I(k)}
$$
where $\beta(k):={\delta(k)(S(k)+I(k)))}/{S(k)}$ is the infectious rate and $\gamma(k):=v(k)K$ is the recovery rate. 
If~$R_0(\cdot)>1$, the number of infected cases is expected to increase; if~$R_0(\cdot)<1$, then the infected cases decrease.
Finally, we should mention that, in terms of the terminology in systems and control theory, the epidemiological dynamics described by equation~(\ref{eq: infected cases dynamics}) is indeed an \textit{uncertain linear time-varying} system. This observation is useful in the following sections to follow.

\subsection{An Almost Sure Epidemic Control  Problem}
In this subsection, we introduce the first epidemic control  problem, which we call the \textit{almost sure epidemic control  problem}.
That is, given the modified SIRD model~\ref{eq: infected cases dynamics}, we seek a control policy~$u(\cdot)$ which assures the following conditions hold:

$(i)$ The ratio of infected cases converge to zero with probability one; i.e., $$\lim_{k \to \infty} \frac{I(k)}{I_0}  = 0$$ with probability one.

$(ii)$ There exists a constant $M_I>0$ such that infected cases $I(k)$ is upper bounded uniformly by $M_I$; i.e., $I(k)\leq M_I$ for all $k$ with probability one.

$(iii)$ There exists a constant $M_D>0$ such that deceased cases $D(k)$ is upper bounded uniformly by $M_D$; i.e., $D(k)\leq M_D$ for all $k$ with probability one.	

\vspace{3mm}
\textbf{Remarks.}
The results related to this almost sure epidemic control  problem is discussed in Section~\ref{Section:Control of Epidemics in Almost Sure Sense} when a linear control policy of the form $u(k)=K I(k)$ with pure gain $K \geq 0$ for all $k\geq 0$ is applied.

While almost sure epidemic control policy, if exists, can be a good candidate to mitigate the pandemic. However, in practice, some potential issues remain. For example, in some cases, the cost for implementing an almost sure epidemic control  policy may be still too expensive in practice; see~Remark \ref{Remarks on Practical Considerations} in Section~\ref{Section:Control of Epidemics in Almost Sure Sense}.
To hedge this issue, we now introduce our second epidemic control  problem which we call the \textit{average epidemic control  problem}  aimed at controlling the ``expected" infected cases and ``expected" deceased cases.

\subsection{An Average Epidemic Control  Problem}
To address the issue raised in previous subsection, we consider the second epidemic control  problem which we call the \textit{average epidemic control  problem}  aimed at controlling the ``expected" infected cases and ``expected" deceased cases. Rigorously speaking,
given the modified SIRD model~\ref{eq: infected cases dynamics}, we seek a control policy $u(\cdot)$ which assures the following conditions hold:

$(i)$ The ratio of expected infected cases $\mathbb{E}[I(k)]/I_0$ converge to zero asymptotically.

$(ii)$ There exists a constant $C_I>0$ such that expected infected cases $\mathbb{E}[I(k)]$ is  upper bounded uniformly by $C_I$; i.e., $\mathbb{E}[I(k)]\leq C_I$ for all $k$.

$(iii)$ There exists a constant $C_D>0$ such that deceased cases $\mathbb{E}[D(k)]$ is  upper bounded uniformly by $C_D$; i.e., $\mathbb{E}[D(k)]\leq C_D$ for all $k$.

\vspace{3mm}
\textbf{Remark.} Similar to the previous remark,
the results related to this average epidemic control  problem is discussed in Section~\ref{Section: Control of Epidemics in Expected Value} when a linear control policy of the form $u(k)=K I(k)$ with $K \geq 0$  for all $k\geq 0$ is applied.
%

\section{Analysis of Epidemiological Model} \label{Section: Analysis of Dynamics}
In this section, to understand the contagion process and the evolution of the epidemic,
we assume that there exists a control policy $u(\cdot)$ which assures that infected cases~$I(k)$, susceptible cases~$S(k)$, recovered cases $R(k)$ and deceased cases~$D(k)$ are all nonnegative for all $k$ with probability one.\footnote{Such a policy exists, for example, by taking $u(k) = K I(k)$ with $K \in [0, (1-d_{\max})/v_{\max}]$; see Section~\ref{Section:Control of Epidemics in Almost Sure Sense} for detailed discussion on this topic.}
The analytic expression for  infected cases, susceptible cases, recovered cases, and deceased cases are provided. The results obtained in this section are useful for the subsequent sections to follow.

\subsection{Analysis of Infected Cases}
In the sequel, we use a shorthand notation  
\begin{align} \label{eq: Phi_I formula}
	\Phi(k,k_0):= \prod_{i=k_0}^{k-1} (1+\delta(i) - d_I(i))
\end{align}
for integer $k\geq k_0\geq 0$.
We mention here that $\Phi(k,k_0)$ is a \textit{state transition function} for infected cases from day $k_0$ to $k$. 
The reader is referred to \cite{chen1999linear} for detailed discussion on this topic. We may sometimes write $\Phi_I(\cdot,\cdot)$ instead of $\Phi(\cdot,\cdot)$ to emphasize that the function is related  to the infected cases. 
The following lemma characterizes some useful properties of the function.

\begin{lemma}[Properties of State Transition Function] \label{lemma: upper bound on Phi_c} 
	For integer $k\geq k_0\geq 0$,	the state transition function $\Phi_I(k,k_0)$ satisfies
	
	(i) $(1-d_{\max})^{k-k_0} \leq \Phi_I(k,k_0) \leq (1+\delta_{\max} )^{k-k_0}$ with probability one.
	
	(ii)
	$
	\mathbb{E}[\Phi_I(k,k_0)] = 	(1+\overline\delta-\overline{d})^{k-k_0}
	$.
\end{lemma}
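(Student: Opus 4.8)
The plan is to prove the two claims directly from the definition
$$
\Phi_I(k,k_0) = \prod_{i=k_0}^{k-1} \bigl(1+\delta(i)-d_I(i)\bigr),
$$
treating each as a consequence of a factor-by-factor estimate together with the independence assumptions already imposed on the model. For part~(i), first I would check that each factor is bounded. Since $0\leq \delta(i)\leq \delta_{\max}$ and $0\leq d_I(i)\leq d_{\max}<1$ with probability one, each factor satisfies $1+\delta(i)-d_I(i)\leq 1+\delta_{\max}$ and $1+\delta(i)-d_I(i)\geq 1-d_{\max}>0$ with probability one. The key point is that the lower bound $1-d_{\max}$ is strictly positive, so every factor is positive; this lets me multiply the $k-k_0$ per-factor inequalities together without any sign reversal. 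Taking the product over $i=k_0,\dots,k-1$ then yields
$$
(1-d_{\max})^{k-k_0} \leq \Phi_I(k,k_0) \leq (1+\delta_{\max})^{k-k_0}
$$
with probability one, which is exactly claim~(i). I would note the degenerate case $k=k_0$, where the empty product equals $1$ and both bounds equal $1$, so the inequality holds trivially.

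For part~(ii), the strategy is to pass the expectation through the product using independence. Here the crucial structural facts are that the paper assumes $\delta(k)$ and $d_I(k)$ are each i.i.d.\ sequences and, moreover, that $\delta(k)$ and $d_I(k)$ are mutually independent. Consequently the factors $1+\delta(i)-d_I(i)$ for distinct indices $i$ involve disjoint, hence independent, collections of the underlying random variables, so they are mutually independent random variables. This allows me to factor the expectation of the product into the product of expectations:
$$
\mathbb{E}[\Phi_I(k,k_0)] = \prod_{i=k_0}^{k-1} \mathbb{E}\bigl[1+\delta(i)-d_I(i)\bigr].
$$
By linearity of expectation and the shorthand $\overline{\delta}=\mathbb{E}[\delta(i)]$, $\overline{d}=\mathbb{E}[d_I(i)]$ (constant in $i$ by the identical-distribution assumption), each factor equals $1+\overline{\delta}-\overline{d}$, and the product of $k-k_0$ identical terms gives $(1+\overline{\delta}-\overline{d})^{k-k_0}$, as claimed.

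The main obstacle, such as it is, is purely one of bookkeeping about independence: I must be careful to invoke both the i.i.d.\ property within each sequence and the mutual independence \emph{across} the two sequences, since each factor $1+\delta(i)-d_I(i)$ mixes one $\delta$ and one $d_I$. Without the stated mutual independence of $\{\delta(k)\}$ and $\{d_I(k)\}$, the factorization of the expectation in part~(ii) could fail, so I would state explicitly which hypotheses I am using at the step where the product of expectations is formed. Both parts are otherwise elementary; no deeper machinery is required, and the positivity of the lower bound in part~(i) is the only subtlety worth flagging.
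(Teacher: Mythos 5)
Your proof is correct and follows essentially the same route as the paper's: a factor-by-factor bound $1-d_{\max}\leq 1+\delta(i)-d_I(i)\leq 1+\delta_{\max}$ with the positivity of the lower bound justifying the multiplication for part~(i), and factoring $\mathbb{E}[\Phi_I(k,k_0)]$ into a product of expectations via the i.i.d.\ and mutual-independence assumptions for part~(ii). Your added remarks on the empty product at $k=k_0$ and the explicit independence bookkeeping are sound refinements of the paper's argument, not a different approach.
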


\begin{proof} To prove part~$(i)$, we let $f(k):=1+\delta(k)-d_I(k)$. Since $\delta(k) \in [0, \delta_{\max}]$ and $d_I(k) \geq 0$ for all $k \geq k_0 \geq 0$, we have 
	$
	1-d_{\max} \leq f(k) \leq  1+\delta_{\max}
	$
	for all $k$ with probability one. Note that the upper and lower bounds are nonnegative. Hence,  a straightforward multiplication leads to the desired inequality and the proof of part~$(i)$ is complete.
	
	To prove part~$(ii)$, with the aid of the fact that $\delta(k)$ and $d_I(k)$ are i.i.d. in $k$ and are mutually independent, we obtain
	\begin{align*}
		\mathbb{E}[\Phi_I(k,k_0)] 
		&= 	\mathbb{E}\left[ \prod_{i=k_0}^{k-1} (1+\delta(i)-d_I(i)) \right]\\
		&= 	 \prod_{i=k_0}^{k-1} \mathbb{E}\left[1+\delta(i)-d_I(i) \right]\\
		& =	 (1+\overline\delta-\overline{d_I})^{k-k_0}
	\end{align*}
	where $\overline\delta=\mathbb{E}[\delta(k_0)]$ and $\overline{d_I}=\mathbb{E}[d_I(k_0)]$, which is desired.
\end{proof}

Having proved Lemma~\ref{lemma: upper bound on Phi_c}, we are now ready to characterize the infected cases~$I(k)$.

\begin{lemma}[Number of Infected Cases] \label{lemma: solution of I_k} Given $I_0>0$,
	the infected cases at the $k$th day is described~by
	\[
	I(k) =  \Phi_I(k,0)I_0 - \sum_{i = 0}^{k-1} \Phi_I(k,i+1)v(i) u(i)
	\]
	for $k \geq 1.$
	Moreover, the expected infected cases satisfy
	\[
	\mathbb{E}\left[ I(k) \right] \leq (1+\overline\delta-\overline{d_I})^{k} I_0 - v_{\min} \sum_{i = 0}^{k-1} (1+ \overline\delta - \overline{d_I})^{k-i-1}\mathbb{E}[u(i)]\\.
	\]
\end{lemma}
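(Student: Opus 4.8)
The plan is to recognize the infected-case recursion as a scalar, uncertain, linear time-varying system and to solve it by the discrete variation-of-constants (Duhamel) formula. Rewriting the second line of~(\ref{eq: infected cases dynamics}) as $I(k+1) = (1+\delta(k)-d_I(k))I(k) - v(k)u(k)$, the homogeneous part propagates $I_0$ forward through the state transition function $\Phi_I(k,0)$, while each forcing term $-v(i)u(i)$ is propagated from step $i+1$ by $\Phi_I(k,i+1)$. I would establish the closed-form identity by induction on $k$.

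For the induction, the base case $k=1$ reads $I(1) = \Phi_I(1,0)I_0 - \Phi_I(1,1)v(0)u(0)$, which matches the recursion once one adopts the empty-product convention $\Phi_I(k,k)=1$. For the inductive step I would insert the assumed formula for $I(k)$ into $I(k+1) = (1+\delta(k)-d_I(k))I(k) - v(k)u(k)$ and use the one-step composition property $\Phi_I(k+1,j) = (1+\delta(k)-d_I(k))\,\Phi_I(k,j)$, valid for $0\le j\le k$, which follows directly from the product definition~(\ref{eq: Phi_I formula}). This absorbs the factor $(1+\delta(k)-d_I(k))$ into every transition function, shifting $\Phi_I(k,\cdot)$ to $\Phi_I(k+1,\cdot)$; the leftover $-v(k)u(k)$ becomes the new $i=k$ summand via $\Phi_I(k+1,k+1)=1$, closing the identity.

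For the expectation, I would take $\mathbb{E}[\cdot]$ of the closed form and use linearity to write $\mathbb{E}[I(k)] = \mathbb{E}[\Phi_I(k,0)]I_0 - \sum_{i=0}^{k-1}\mathbb{E}[\Phi_I(k,i+1)v(i)u(i)]$, with $\mathbb{E}[\Phi_I(k,0)]=(1+\overline\delta-\overline{d_I})^{k}$ supplied by Lemma~\ref{lemma: upper bound on Phi_c}(ii). The heart of the argument is to factor each summand as $\mathbb{E}[\Phi_I(k,i+1)]\,\mathbb{E}[v(i)]\,\mathbb{E}[u(i)]$. Here causality of $u$ is essential: $u(i)$ is a function only of information available through day $i$, and since $v(i),\delta(i),d_I(i)$ enter during the transition from $I(i)$ to $I(i+1)$, the state $I(i)$ depends only on the randomness at times strictly before $i$; meanwhile $\Phi_I(k,i+1)$ depends only on $\delta(j),d_I(j)$ for $j\ge i+1$, and $v(i)$ sits at day $i$. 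Because $\delta(\cdot),d_I(\cdot),v(\cdot)$ are i.i.d. and mutually independent, these three index blocks are disjoint, so the product factors; Lemma~\ref{lemma: upper bound on Phi_c}(ii) then gives $\mathbb{E}[\Phi_I(k,i+1)]=(1+\overline\delta-\overline{d_I})^{k-i-1}$ and $\mathbb{E}[v(i)]=\overline v$.

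Finally I would pass from the resulting equality to the stated inequality. Since $(1+\overline\delta-\overline{d_I})^{k-i-1}\ge 0$, $\mathbb{E}[u(i)]\ge 0$ (as $u(\cdot)\ge 0$), and $\overline v\ge v_{\min}>0$, replacing $\overline v$ by the smaller $v_{\min}$ only shrinks each subtracted term and hence enlarges the right-hand side, producing the claimed upper bound. The main obstacle I anticipate is the factorization step: the whole estimate rests on arguing cleanly that $u(i)$ is independent of both $v(i)$ and $\Phi_I(k,i+1)$, which in turn requires being explicit about the meaning of ``causal'' and about exactly which random variables enter the state at each day. By comparison, the inductive identity and the final $v_{\min}$ replacement are routine.
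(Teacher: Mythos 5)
Your proof is correct, and its skeleton matches the paper's: variation of constants for the identity, then expectation, independence-by-causality, and the $v_{\min}$ bound. Two differences are worth noting. First, the paper does not actually prove the closed-form identity at all---it invokes the standard solution of a linear time-varying system and cites a systems-theory textbook---so your induction with the composition property $\Phi_I(k+1,j)=(1+\delta(k)-d_I(k))\,\Phi_I(k,j)$ and the convention $\Phi_I(k,k)=1$ supplies exactly what the paper omits. Second, and more substantively, you treat the cross term in the opposite order. You fully factor $\mathbb{E}[\Phi_I(k,i+1)v(i)u(i)]=\mathbb{E}[\Phi_I(k,i+1)]\,\overline{v}\,\mathbb{E}[u(i)]$, obtaining an exact equality before weakening $\overline{v}$ to $v_{\min}$ (valid since $1+\overline{\delta}-\overline{d_I}\geq 1-d_{\max}>0$ and $u\geq 0$); this requires $u(i)$ to be independent of $v(i)$ \emph{in addition to} being independent of the future block $\{\delta(j),d_I(j)\}_{j\geq i+1}$ appearing in $\Phi_I(k,i+1)$. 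The paper instead first bounds $v(i)\geq v_{\min}$ pointwise inside the expectation (legitimate because $\Phi_I(k,i+1)u(i)\geq 0$ with probability one, by Lemma~\ref{lemma: upper bound on Phi_c}(i) and $u\geq 0$) and only then factors $\mathbb{E}[\Phi_I(k,i+1)u(i)]=\mathbb{E}[\Phi_I(k,i+1)]\,\mathbb{E}[u(i)]$, so it never needs $u(i)$ and $v(i)$ to be independent. Your extra independence claim is defensible under the natural reading of causality---$v(i)$ is realized during the transition from day $i$ to day $i+1$ and hence lies outside the day-$i$ information set, and the claim certainly holds for the linear policies $u(k)=KI(k)$ used throughout the paper---and it buys you a sharper intermediate statement (an equality featuring $\overline{v}$, of the same shape as Lemma~\ref{lemma: Expected c_k}). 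But be aware that if ``current information'' were construed to include the realized $v(i)$, your factorization would break while the paper's ordering would survive; since you explicitly flagged and justified this point, there is no gap, only a slightly less robust (though stronger-yielding) ordering of the two steps.
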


\begin{proof} 
	Since the infected cases dynamics~(\ref{eq: infected cases dynamics}) is a linear time-varying system, the solution $I(k)$ and its corresponding proof are well-established;  hence we omitted here and refer the reader to any standard textbook in system theory; e.g., see \cite{chen1999linear}.
	
	To complete the proof, we must show that the expected infected cases can be characterized by the form described in the lemma. Using Lemma~\ref{lemma: upper bound on Phi_c} and the fact that $v(k) \geq v_{\min}$ and are i.i.d., we observe that
	\begin{align*}
		\mathbb{E}\left[ I(k) \right] 
		&= \mathbb{E}[\Phi_I(k,0)]I_0 - \sum_{i = 0}^{k-1} \mathbb{E}[\Phi_I(k,i+1) v(i)u(i)]\\
		&\leq   (1+\overline\delta-d_I)^{k} I_0 - v_{\min} \sum_{i = 0}^{k-1} \mathbb{E}[\Phi_I(k,i+1) u(i)].
	\end{align*}
	Note that $u(i)$, as assumed in Section~\ref{Section: Preliminaries and Problem Formulation}, is causal; i.e., it only depends on the information available up to the $i$th day. In addition, according to equation~(\ref{eq: Phi_I formula}), the state transition function $		\Phi_I(k,i+1)= \prod_{j=i+1}^{k-1} (1+\delta(j) - d_I(j))$, which is independent of~$u(i)$ for $i=0,\dots,k-1$. Therefore, it follows that
	\begin{align*}
		\mathbb{E}\left[ I(k) \right] 
		&= \mathbb{E}[\Phi_I(k,0)]I_0 - \sum_{i = 0}^{k-1} \mathbb{E}[\Phi_I(k,i+1) v(i)u(i)]\\
		&\leq   (1+\overline\delta-d_I)^{k} I_0 - v_{\min} \sum_{i = 0}^{k-1} (1+\overline\delta-\overline{d_I})^{k-i-1}\mathbb{E}[u(i)]. \qedhere
	\end{align*}
\end{proof}

\subsubsection{Remark} As seen later in the next section, if one adopts the linear feedback policy of the form $u(k)=KI(k)$, then the solution $I(k)$ described in Lemma~\ref{lemma: solution of I_k} can be greatly simplified.

\subsection{Analysis of Susceptible Cases}
The following lemma gives a closed-form expression for the susceptible cases~$S(k)$.

\begin{lemma}[Number of Susceptible Cases] \label{lemma: number of susceptible cases}
	With $S(0)=S_0>0$, the number of susceptible cases at the $k$th day is given~by
	\begin{align*}\label{eq: s_k formula}
		S(k)  =  S_0 -\delta(0) I_0- \sum_{i = 1}^{k-1} \delta(i) \left( \Phi_I(i,0)I_0 - \sum_{j = 0}^{i-1} \Phi_I(i,j+1)v(j) u(j) \right)
	\end{align*}
	for $k\geq 1$ where $\Phi_I(k,k_0)$ is the state transition function defined in equation~(\ref{eq: Phi_I formula}) for $k\geq k_0\geq 0$.
\end{lemma}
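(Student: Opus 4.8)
The plan is to exploit the fact that the susceptible dynamics in~(\ref{eq: infected cases dynamics}) is a simple first-order difference equation whose only driving term is the infected cases. Hence its solution telescopes, and once I have the telescoped form I substitute the closed-form expression for $I(\cdot)$ already established in Lemma~\ref{lemma: solution of I_k}.

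First, I would iterate the recursion $S(k+1)=S(k)-\delta(k)I(k)$ starting from $S(0)=S_0$. Summing the increments over one day at a time gives
\[
S(k) = S_0 - \sum_{i=0}^{k-1}\delta(i)I(i)
\]
for $k\ge 1$, an identity that is confirmed by a one-line induction on $k$ (the base case $k=1$ is immediate, and the inductive step just appends the term $-\delta(k)I(k)$). Splitting off the $i=0$ summand and using the initial value $I(0)=I_0$ then yields
\[
S(k) = S_0 - \delta(0)I_0 - \sum_{i=1}^{k-1}\delta(i)I(i).
\]

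Second, I would substitute into the remaining sum the closed form from Lemma~\ref{lemma: solution of I_k}, namely $I(i)=\Phi_I(i,0)I_0-\sum_{j=0}^{i-1}\Phi_I(i,j+1)v(j)u(j)$, which is valid for each index $i\ge 1$ appearing in the sum. Carrying out this term-by-term replacement, with $\Phi_I$ as defined in~(\ref{eq: Phi_I formula}), reproduces exactly the asserted expression for $S(k)$, completing the proof.

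The argument presents no genuine obstacle beyond careful index bookkeeping. The one point requiring attention is that the closed form of Lemma~\ref{lemma: solution of I_k} holds only for $k\ge 1$, so the $i=0$ contribution must be peeled off separately and $I(0)$ replaced by its initial value $I_0$ rather than by the general formula. I would also emphasize that this is a pathwise identity valid for every realization of the random sequences, so no independence or distributional hypotheses on $\delta(\cdot)$, $v(\cdot)$, or $d_I(\cdot)$ are invoked; those would enter only if one subsequently passed to expectations, as in the companion bounds for $\mathbb{E}[I(k)]$.
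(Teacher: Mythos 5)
Your proposal is correct and matches the paper's proof essentially verbatim: both iterate the recursion to obtain $S(k)=S_0-\sum_{i=0}^{k-1}\delta(i)I(i)$, peel off the $i=0$ term using $I(0)=I_0$, and substitute the closed form for $I(i)$ from Lemma~\ref{lemma: solution of I_k}. Your added observations (the identity is pathwise, so no independence assumptions are needed, and the $i=0$ term must be handled separately since the closed form holds only for $i\geq 1$) are accurate refinements of the same argument.
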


\begin{proof} Recalling that $S(k+1)=S(k)-\delta(k)I(k)$, a straightforward inductive calculation leads to
	\begin{align} 
		S(k) 
		&=   S_0 - \sum_{i = 0}^{k-1} \delta(i) I(i)\\
		&= S_0 -\delta(0) I_0- \sum_{i = 1}^{k-1} \delta(i) I(i)
	\end{align}
	Using Lemma~\ref{lemma: solution of I_k}, we obtain
	\[
	S(k) = S_0 -\delta(0) I_0- \sum_{i = 1}^{k-1} \delta(i) \left( \Phi_I(i,0)I_0 - \sum_{j = 0}^{i-1} \Phi_I(i,j+1)v(j) u(j) \right). \qedhere
	\]
	%
\end{proof}

\subsection{Analysis of Recovered Cases}
In this subsection, we characterize the solution of recovered cases and some of its useful properties.

\begin{lemma}[Number of Recovered Cases]\label{lemma: Positivity of recovered Cases} With $r(0)=0$,
	the recovered cases at the $k$th day is given by
	$
	R(k) =  \sum_{i = 0}^{k-1}  v(i) u(i)
	$
	for $k \geq 1.$	
	Moreover, the expected number of recovered cases is given by
	$
	\mathbb{E}[R(k)] = 	 \sum_{i = 0}^{k-1}  \mathbb{E}[v(i) u(i)].
	$
	Moreover, it is bounded; i.e.,
	$$
	v_{\min} \sum_{i = 0}^{k-1} \mathbb{E}[ u(i)] \leq 	\mathbb{E}[R(k)] \leq 	 v_{\max}  \sum_{i = 0}^{k-1}  \mathbb{E}[ u(i)].
	$$
	for $k\geq 1.$ 
	
\end{lemma}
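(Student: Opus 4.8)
The plan is to treat the three assertions in turn --- the closed form for $R(k)$, its expectation, and the two-sided bound --- each of which follows directly from the defining recursion $R(k+1) = R(k) + v(k)u(k)$ together with the initial condition $R(0)=0$. In particular, unlike the infected- and susceptible-case analyses, there is no need here to invoke the state-transition function $\Phi_I$ or the bounds of Lemma~\ref{lemma: upper bound on Phi_c}, since the recovered-case dynamics are a pure accumulation and do not feed back through $\Phi_I$.

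First I would establish the closed form $R(k) = \sum_{i=0}^{k-1} v(i)u(i)$ by induction on $k$ (a telescoping of the recursion works equally well). The base case $k=1$ reads $R(1) = R(0) + v(0)u(0) = v(0)u(0)$, which is the single-term sum. For the inductive step, assuming $R(k) = \sum_{i=0}^{k-1} v(i)u(i)$, the recursion gives $R(k+1) = R(k) + v(k)u(k) = \sum_{i=0}^{k} v(i)u(i)$, closing the induction.

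For the expectation, I would apply linearity directly to the closed form to get $\mathbb{E}[R(k)] = \sum_{i=0}^{k-1}\mathbb{E}[v(i)u(i)]$. The only thing worth recording is integrability of each summand, which is immediate from $0 < v_{\min}\le v(i)\le v_{\max}\le 1$ and $u(i)\ge 0$.

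Finally, for the bounds I would argue \emph{pathwise} before taking expectations. Since $v_{\min}\le v(i)\le v_{\max}$ with probability one and $u(i)\ge 0$, multiplying the inequalities by the nonnegative factor $u(i)$ preserves their direction, so $v_{\min}u(i) \le v(i)u(i) \le v_{\max}u(i)$ almost surely. Taking expectations (which respects almost-sure inequalities) and summing over $i=0,\dots,k-1$ yields the stated two-sided bound. I expect the only subtlety --- the closest thing to an obstacle --- is recognizing that this sandwiching step requires \emph{no} independence between $v(i)$ and $u(i)$, since it holds pointwise on each sample path; this is in contrast to the computation of $\mathbb{E}[\Phi_I]$ in Lemma~\ref{lemma: upper bound on Phi_c}, where the i.i.d. and mutual-independence hypotheses were genuinely needed to factor the expectation of the product.
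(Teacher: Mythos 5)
Your proof is correct and takes essentially the same route as the paper's: induction on the recursion $R(k+1)=R(k)+v(k)u(k)$ for the closed form, linearity of expectation, and the two-sided bounds obtained from $v_{\min}\leq v(i)\leq v_{\max}$. Your explicit remarks that the sandwich step needs $u(i)\geq 0$ but no independence between $v(i)$ and $u(i)$ are points the paper leaves implicit, but they do not change the argument.
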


\begin{proof} While the proof is simple, for the sake of completion, we provide a full proof here.
	We first recall that $r(0)=0$ by assumption on initial condition. 
	To complete the proof, we proceed a proof by induction.
	Begin by noting that for $k=1$, the recursion on $R(k)$ tells us that
	$
	r(1)  = r(0) + v(0)u(0) = v(0)u(0).
	$
	We now proceed a proof by induction. Assuming that $r(j) =   \sum_{i=0}^{j-1} v(i)u(i)$ for all $j \leq k$, we must show $R(k+1) = \sum_{i=0}^{k}v(i)u(i)$.
	Note that
	\begin{align*}
		R(k+1) 
		&= R(k) + v(k)u(k)\\
		& =  \sum_{i=0}^{k} v(i)u(i)
	\end{align*}
	which is desired.

	
	To complete the proof, we now show the desired expected recovered cases and its bounds. For $k\geq 1$, since $R(k) = \sum_{i = 0}^{k-1} u(i)v(i)$, with the linearity of expected value, it immediately follows that
	$
	\mathbb{E}[R(k)] = 	 \sum_{i = 0}^{k-1}  \mathbb{E}[v(i) u(i)].
	$
	Hence, using the fact that $0<v_{\min} \leq v(k)\leq v_{\max} \leq 1$ for all $k$, it implies that
	\begin{align*}
		v_{\min} \sum_{i = 0}^{k-1} \mathbb{E}[ u(i)] \leq 	\mathbb{E}[R(k)] \leq 	  v_{\max}  \sum_{i = 0}^{k-1}  \mathbb{E}[ u(i)] 
	\end{align*}
which is desired. \qedhere
\end{proof}

\subsubsection{Remark} It is readily seen that $R(k)$ is increasing in $k$ since $u,v$ are nonnegative. 

\subsection{Analysis of Deceased Cases}
The number of deceased cases and its expectation are provided in this subsection.

\begin{lemma}[Number of Deceased Cases] \label{lemma: number of deaths}
	With $D(0)=0$, the number of deceased cases at the $k$th day is given~by
	\begin{align}\label{eq: d_k formula}
		D(k)  =  \sum_{i = 0}^{k-1} d(i) 
		I(i) 
	\end{align}
	for $k\geq 1.$
	Moreover, 
	the expected number of death is bounded by $$0\leq \mathbb{E}[D(k)]  \leq d_{\max}\sum_{i=0}^{k-1}\mathbb{E}[I(i)]  .$$ 
\end{lemma}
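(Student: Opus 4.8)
The plan is to mirror the structure used in the proof of Lemma~\ref{lemma: Positivity of recovered Cases}: first obtain the closed form for $D(k)$ by unrolling the death recursion, and then derive the two-sided bound on $\mathbb{E}[D(k)]$ by making a pathwise estimate and only afterwards taking expectations.

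First I would establish the closed-form expression by induction on $k$. The death dynamics in~(\ref{eq: infected cases dynamics}) read $D(k+1) = D(k) + d_I(k) I(k)$, and with $D(0)=0$ the base case gives $D(1) = d_I(0) I_0 = \sum_{i=0}^{0} d_I(i) I(i)$. Assuming $D(k) = \sum_{i=0}^{k-1} d_I(i) I(i)$, substitution into the recursion yields $D(k+1) = \sum_{i=0}^{k-1} d_I(i) I(i) + d_I(k) I(k) = \sum_{i=0}^{k} d_I(i) I(i)$, which closes the induction and proves the stated formula.

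For the bounds, the key point is that the estimates should be made pathwise, before any expectation is taken, so that no joint distribution of $d_I(i)$ and $I(i)$ needs to be computed. Under the standing assumption of Section~\ref{Section: Analysis of Dynamics} that the adopted control policy keeps all states nonnegative with probability one, each summand $d_I(i) I(i)$ is a product of nonnegative quantities; hence $D(k) \geq 0$ and, by monotonicity of expectation, $\mathbb{E}[D(k)] \geq 0$. For the upper bound I would use the pointwise inequality $d_I(i) \leq d_{\max}$, valid with probability one, together with $I(i) \geq 0$ to get $d_I(i) I(i) \leq d_{\max} I(i)$ almost surely, and therefore $D(k) \leq d_{\max} \sum_{i=0}^{k-1} I(i)$. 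Taking expectations and invoking linearity then delivers $\mathbb{E}[D(k)] \leq d_{\max} \sum_{i=0}^{k-1} \mathbb{E}[I(i)]$, as desired.

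There is no serious obstacle in this argument; the only point requiring care is the order of operations in the upper bound. One is tempted instead to write $\mathbb{E}[d_I(i) I(i)]$ and factor the product, which would require an independence argument. Such an argument is in fact available, since $I(i)$ depends only on $\delta(j), d_I(j), v(j), u(j)$ for $j<i$ and $d_I(i)$ is independent of all of these, yielding the sharper identity $\mathbb{E}[D(k)] = \overline{d_I}\sum_{i=0}^{k-1}\mathbb{E}[I(i)]$. Estimating pathwise first sidesteps this entirely and produces the stated inequality with the cruder constant $d_{\max} \geq \overline{d_I}$ without invoking any independence hypothesis.
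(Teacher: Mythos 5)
Your proposal is correct and takes essentially the same route as the paper: the paper also unrolls the recursion $D(k+1)=D(k)+d_I(k)I(k)$ to obtain the closed form (by appeal to the same computation as in Lemma~\ref{lemma: solution of I_k}, where you use explicit induction) and then derives the bounds from $0\leq d_I(k)\leq d_{\max}$ together with linearity of expectation. Your pathwise-first estimate merely makes explicit the nonnegativity of $I(i)$ that the paper's compressed argument implicitly relies on, and your remark about the sharper identity $\mathbb{E}[D(k)]=\overline{d_I}\sum_{i=0}^{k-1}\mathbb{E}[I(i)]$ via independence is exactly what the paper later establishes in Lemma~\ref{lemma: control of expected death}.
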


\begin{proof} With $D(0)=0$, we begin by recalling that the deaths dynamics~(\ref{eq: infected cases dynamics}) is given by $D(k+1) = D(k) + d_I(k)I(k)$ for $k\geq 0$.
	An almost identical proof as seen in Lemma~\ref{lemma: solution of I_k} leads to 
	$
	D(k)  = \sum_{i = 0}^{k-1} 
	d_I(i) I(i) 
	$
	for $k \geq 1.$ 
	
	To complete the proof, we take expectation on equation~\ref{eq: d_k formula} and, with the linearity of expectation and the fact $0\leq d_I(k)\leq d_{\max}$, the desired inequality follows immediately.
\end{proof}


\subsection{What if No or Extremely Ineffective Control}\label{Subsection: What if No or Extremely Ineffective Control}
If there are low or no medical supports, i.e., $u(k) \approx 0$, or the medical treatments are lacking or extremely ineffective, due to the reasons such as the people are not disciplined or unwilling to follow the medical advise to wearing masks or social distancing; i.e., $v(k) \approx 0$, then it is readily verified that the susceptible cases, infected cases, recovered cases, and deceased cases become
\begin{align}\label{eq: c_r_d with no control}
	\begin{cases}
		S(k) \approx  S_0 -\left( \delta(0) + \sum_{i = 1}^{k-1} \delta(i)  \left( \prod_{j=0}^{i-1} (1+\delta(j) - d_I(j)) \right) \right) I_0  \\
		I(k) \approx  \prod_{i=0}^{k-1} (1+\delta(i) - d_I(i) )I_0; \\
		R(k) \approx  0;\\
		D(k) \approx    \sum_{i = 0}^{k-1}d_I(i)  \left(  \prod_{j=0}^{i-1} (1+\delta(j) - d_I(j))  I_0 \right). 
	\end{cases}
\end{align}

Consistent with the intuition, the equation~(\ref{eq: c_r_d with no control}) above tells us that the recovered cases $R(k)$ are close to zero approximately. On the other hand, the infected cases $I(k)$ and the deceased cases~$D(k)$ are both increasing monotonically; i.e., an ``outbreak" is seen.

\subsection{Linear Epidemic Control  Policy} 
To mitigate the pandemic, in this paper, we consider a class of \textit{linear epidemic control  policy} of the form 
$
u(k) :=K I(k)
$
for $k\geq 0$ 
where $K \geq 0$ is the \textit{feedback gain} which represents the degree of the epidemic control  effort on providing medical treatment or mandating the non-pharmaceutical interventions.
In the sequel, we sometimes call the linear epidemic control  policy above as \textit{linear feedback policy} or just \textit{linear policy} for the sake of brevity.

\section{Control of Epidemics in Almost Sure Sense} \label{Section:Control of Epidemics in Almost Sure Sense}
As mentioned in Section~\ref{Section: Preliminaries and Problem Formulation}, our objective of this section is to address the \textit{almost sure epidemic control  problem} using  the linear control policy. We begin with discussing the control of infected cases in the  almost sure sense.

\subsection{Control of Infected Cases}
The following lemma is useful for deriving one of our main results: Theorem~\ref{theorem: Linear Controlled Infected Cases}.

\begin{lemma}[Boundedness of Infected Cases]\label{lemma: I_k solution under linear policy}
	For any linear  control policy of the form $u(k)=K I(k)$ with gain $0 \leq K \leq {(1-d_{\max})}/{v_{\max}}$, we have
	
	(i)
	$
	0\leq I(k) \leq  (1+\delta_{\max}-K v_{\min})^k I_0$
	for all $k\geq 1$ with probability one.
	
	(ii)
	If $ K < (1-d_{\max})/{v_{\max}}$, then $I(k)>0$ for all $k\geq 1$ with probability one. 
\end{lemma}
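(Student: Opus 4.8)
The plan is to exploit the fact that under the linear policy $u(k)=KI(k)$ the infected-case dynamics collapses into a closed homogeneous scalar recursion. Substituting $u(k)=KI(k)$ into the second line of~(\ref{eq: infected cases dynamics}) gives
\[
I(k+1) = \left(1 + \delta(k) - Kv(k) - d_I(k)\right)I(k),
\]
so, writing $g(k):=1+\delta(k)-Kv(k)-d_I(k)$ and iterating from $I(0)=I_0$, one obtains the solution $I(k)=\left(\prod_{i=0}^{k-1}g(i)\right)I_0$ for $k\geq 1$. This is precisely the simplification of Lemma~\ref{lemma: solution of I_k} announced in the subsequent remark, the feedback term being absorbed into the product. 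The entire argument then reduces to bounding the random factors $g(i)$ pointwise and uniformly.

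Next I would establish a two-sided almost-sure bound on each factor. Using $0\leq\delta(k)\leq\delta_{\max}$, $0\leq d_I(k)\leq d_{\max}$, $v_{\min}\leq v(k)\leq v_{\max}$ together with $K\geq 0$, the factor is maximized by enlarging $\delta(k)$ and minimizing both the death and feedback penalties, yielding $g(k)\leq 1+\delta_{\max}-Kv_{\min}$; symmetrically one gets $g(k)\geq 1-Kv_{\max}-d_{\max}$, both holding with probability one. The only subtlety is the sign: since $-Kv(k)$ is \emph{decreasing} in $v(k)$, the upper bound is governed by $v_{\min}$ and the lower bound by $v_{\max}$.

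For part~(i) I would invoke the hypothesis $K\leq(1-d_{\max})/v_{\max}$, equivalently $Kv_{\max}\leq 1-d_{\max}$, which forces $1-Kv_{\max}-d_{\max}\geq 0$, so that $g(i)\geq 0$ almost surely. The same hypothesis, with $v_{\min}\leq v_{\max}$, gives $Kv_{\min}\leq 1-d_{\max}<1$, whence the upper bound $1+\delta_{\max}-Kv_{\min}$ is nonnegative. Because every factor thus lies in the nonnegative interval $[0,\,1+\delta_{\max}-Kv_{\min}]$, the $k$ bounds may be multiplied term-by-term without reversing the inequalities, giving $0\leq\prod_{i=0}^{k-1}g(i)\leq(1+\delta_{\max}-Kv_{\min})^k$; multiplying through by $I_0>0$ delivers the claimed estimate. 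For part~(ii) I would simply upgrade the weak inequality to a strict one: $K<(1-d_{\max})/v_{\max}$ gives $1-Kv_{\max}-d_{\max}>0$, so each $g(i)>0$ almost surely, and since $I_0>0$ the product is strictly positive, i.e.\ $I(k)>0$ for all $k\geq 1$ with probability one.

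The argument is elementary, and there is no deep obstacle. The one place demanding care is the bookkeeping of the inequality directions when bounding the feedback term $-Kv(k)$ (pairing $v_{\min}$ with the upper bound and $v_{\max}$ with the lower bound), and confirming the nonnegativity of every factor \emph{before} taking products, which is exactly what the gain restriction $K\leq(1-d_{\max})/v_{\max}$ guarantees.
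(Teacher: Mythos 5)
Your proposal is correct and follows essentially the same route as the paper's own proof: substituting $u(k)=KI(k)$ to obtain the product form $I(k)=\prod_{i=0}^{k-1}\bigl(1+\delta(i)-d_I(i)-Kv(i)\bigr)I_0$, bounding each factor below by $1-d_{\max}-Kv_{\max}\geq 0$ via the gain restriction and above by $1+\delta_{\max}-Kv_{\min}$, and upgrading to strict positivity when $K<(1-d_{\max})/v_{\max}$. Your explicit check that the upper bound on each factor is nonnegative before multiplying the inequalities is a minor point of extra care beyond the paper's write-up, but not a different argument.
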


\begin{proof}
	Fix $k\geq 1$ and let $u(k)=KI(k)$ with $K \in \left[0, \frac{1-d_{\max}}{v_{\max}} \right]$. 
	Then the infected cases dynamics becomes $I(k+1) = (1 + \delta(k) - d_I(k) - Kv(k))I(k)$, which implies that
	\[
	I(k) = \prod_{i=0}^{k-1}f(i)I_0
	\]
	where
	$
	f(i):= 1+\delta(i) -d_I(i) - K v(i) .
	$
	Since
	$$ 
	f(i) \geq  1 -d_{\max} - K v_{\max}
	$$
	and $K  \leq {(1-d_{\max})}/{v_{\max}}$, we conclude that the $f(i)\geq 0$ for all $i=0,\dots, k-1$. 
	Therefore, the product of $f(i)$ is also nonnegative which implies that $I(k) \geq 0$ for all~$k$ with probability one. 
	To complete the proof of part~$(i)$, we note that for any $K \in [0, \frac{1-d_{\max}}{v_{\max}}]$, the infected cases satisfy
	\begin{align*}
		I(k) &= \prod_{i=0}^{k-1}(1+\delta(i) - d_I(i)-K v(i))I_0 \\
		& \leq (1+\delta_{\max}-K v_{\min})^k I_0
	\end{align*}
	which is desired.
	
	To  prove part~$(ii)$, an almost identical proof as seen in part~$(i)$ applies. That is, 
	if $K < {(1-d_{\max})}/{v_{\max}}$, then $I(k)>0$ for all $k$ with probability one.  
\end{proof}

\subsubsection{Remark} According to part~$(ii)$ of the lemma above, we see that if $K<(1-d_{\max})/v_{\max}$, then the infected cases $I(k)$ is strictly positive for all $k$ with probability one. 
Said another way, the infected cases can not be eradicated at any time. 
However, as seen in the next theorem, the ratio of the infected cases can go down to zero asymptotically for some $K$. 

\begin{theorem}[Asymptotic Behavior and Boundedness on Infected Cases]\label{theorem: Linear Controlled Infected Cases} If the maximum uncertain rate of infected cases  satisfies 
	$
	\delta_{\max} < \frac{v_{\min}(1-d_{\max})}{v_{\max}}
	$ and feedback gain
	\begin{align} \label{eq: K_k condition}
		\frac{\delta_{\max}}{v_{\min}} < K  < \frac{1-d_{\max}}{v_{\max}},
	\end{align} 
	then we have
	
	(i)
	$
	\lim_{k\to \infty} I(k)/I_0 = 0
	$ with probability one. 
	
	(ii) The controlled infected cases are upper bounded uniformly; i.e.,
	$
	I(k)  < I_0
	$
	for all $k$ with probability one. If $K=\frac{\delta_{\max}}{v_{\min}} $, then $I(k) \leq I_0.$ 
	
\end{theorem}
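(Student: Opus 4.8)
The plan is to reduce both assertions to the explicit product bound already furnished by Lemma~\ref{lemma: I_k solution under linear policy}, namely $0 \le I(k) \le (1+\delta_{\max}-Kv_{\min})^{k}I_0$ for $k \ge 1$ with probability one, and then to analyze the single scalar $\rho := 1+\delta_{\max}-Kv_{\min}$, which acts as a contraction factor for the controlled infected cases. First I would record that the standing hypothesis $\delta_{\max} < v_{\min}(1-d_{\max})/v_{\max}$ is precisely the statement that $\delta_{\max}/v_{\min} < (1-d_{\max})/v_{\max}$, so the admissible window~(\ref{eq: K_k condition}) for the gain is nonempty; moreover any such $K$ satisfies $0 \le K \le (1-d_{\max})/v_{\max}$, so Lemma~\ref{lemma: I_k solution under linear policy} applies without modification and supplies the bound above.

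The crux of the argument is to show $0 < \rho < 1$. The upper estimate $\rho < 1$ is immediate from the left half of~(\ref{eq: K_k condition}): $K > \delta_{\max}/v_{\min}$ forces $Kv_{\min} > \delta_{\max}$, whence $\rho = 1+(\delta_{\max}-Kv_{\min}) < 1$. For the lower estimate $\rho > 0$ I would use the right half of~(\ref{eq: K_k condition}) together with the normalization $v_{\min} \le v_{\max} \le 1$: from $K < (1-d_{\max})/v_{\max}$ we get $Kv_{\min} \le Kv_{\max} < 1-d_{\max} \le 1 \le 1+\delta_{\max}$, so that $\rho = 1+\delta_{\max}-Kv_{\min} > 0$. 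Hence $0 < \rho < 1$ holds with probability one.

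With $\rho \in (0,1)$ in hand, part~$(i)$ follows by squeezing: $0 \le I(k)/I_0 \le \rho^{k}$ and $\rho^{k}\to 0$, so $\lim_{k\to\infty}I(k)/I_0 = 0$ with probability one. For part~$(ii)$, observe that for every $k \ge 1$ we have $\rho^{k} < 1$, hence $I(k) \le \rho^{k}I_0 < I_0$ with probability one (the equality $I(0)=I_0$ is the trivial boundary case, so the claim is understood for $k \ge 1$). The threshold gain $K=\delta_{\max}/v_{\min}$ is handled by the same bound: it yields exactly $\rho = 1$, so the geometric estimate degrades to $I(k) \le \rho^{k}I_0 = I_0$, which is the stated weak inequality. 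The only point demanding care is the strict positivity $\rho > 0$ --- that is, confirming the base of the geometric term is a genuine element of $(0,1)$ rather than merely being less than $1$ --- which is exactly where the upper constraint on $K$ and the bound $v_{\max}\le 1$ enter; everything else is a routine application of the previously proved lemma.
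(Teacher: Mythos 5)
Your proposal is correct, and for part~$(i)$ it takes a genuinely different and more elementary route than the paper. The paper proves $(i)$ probabilistically: writing $I(k)=\prod_{i=0}^{k-1}f(i)\,I_0$ with $f(i)=1+\delta(i)-d_I(i)-Kv(i)$, it expresses the product as $\exp\bigl(k\cdot\tfrac{1}{k}\sum_{i=0}^{k-1}\log f(i)\bigr)$, invokes the strong law of large numbers to get $\tfrac{1}{k}\sum_{i=0}^{k-1}\log f(i)\to\mathbb{E}[\log f(0)]$ almost surely, and then uses Jensen's inequality together with $K\geq \delta_{\max}/v_{\min}\geq \overline{\delta}/\overline{v}$ to conclude $\mathbb{E}[\log f(0)]<0$. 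You instead observe that the hypothesis $K>\delta_{\max}/v_{\min}$ makes the worst-case factor $\rho=1+\delta_{\max}-Kv_{\min}$ from Lemma~\ref{lemma: I_k solution under linear policy} satisfy $\rho<1$ pathwise, so the squeeze $0\leq I(k)/I_0\leq\rho^{k}\to 0$ settles $(i)$ with no appeal to the SLLN or Jensen, and as a bonus yields an explicit deterministic geometric decay rate rather than a merely asymptotic ergodic statement. Your argument works precisely because the theorem's condition is phrased in the extreme bounds $\delta_{\max},v_{\min}$; the paper's ergodic machinery is what would survive a relaxation to mean conditions of the kind appearing in Theorem~\ref{theorem: Linear Controlled on Expected Infected Cases}, where no uniform $\rho<1$ is available. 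Two small remarks: your verification that $\rho>0$ is not actually needed ($\rho\geq 0$ suffices for $\rho^{k}\to 0$, and nonnegativity of the random factors is already supplied by the lemma), and calling the deterministic inequality $0<\rho<1$ a statement ``with probability one'' is superfluous though harmless. For part~$(ii)$ you and the paper use the identical bound $I(k)\leq(1+\delta_{\max}-Kv_{\min})^{k}I_0$; if anything your write-up is slightly more careful, since the strictness $I(k)<I_0$ for $k\geq 1$ and the borderline case $K=\delta_{\max}/v_{\min}$ (yielding only $I(k)\leq I_0$) are handled explicitly, whereas the paper's displayed chain ends with a weak inequality.
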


\begin{proof} 
	To prove $(i)$, we observe that $I(k) = \prod_{i=0}^{k-1}f(i)I_0$ for $k\geq 1$ where
	$$
	f(i)= 1+\delta(i) - d_I(i)  - K v(i) 
	$$
	as defined in the proof of Lemma~\ref{lemma: I_k solution under linear policy}.
	With the assumed inequalities $$
	\delta_{\max} < \frac{v_{\min}(1-d_{\max})}{v_{\max}}
	$$ and feedback gain
	$
	\frac{\delta_{\max}}{v_{\min}} < K  < \frac{1-d_{\max}}{v_{\max}},
	$
	it is readily verified that $0<f(i) < 1$ for all $i =0,1,\dots, k-1$.
	To show the limit of the ratio $I(k)/I_0$ is zero with probability one,
	%
	we write
	\begin{align*}
		\prod_{i=0}^{k-1}f(i) = \exp\left( k \frac{1}{k}  \sum_{i=0}^{k-1} \log f(i) \right).
	\end{align*}
	We also note that the logarithmic function above is well-defined for $K$ within the assumed range.
	Hence, 
	\begin{align*} \label{eq: limit of I_k}
		\lim_{k \to \infty} \frac{I(k)}{I_0} & = \lim_{k \to \infty} \prod_{i=0}^{k-1}f(i)\\
		&= \lim_{k \to \infty} \exp\left(k \frac{1}{k} \sum_{i=0}^{k-1} \log f(i) \right)\\
		&=  \exp\left( \lim_{k \to \infty}k \frac{1}{k} \sum_{i=0}^{k-1} \log f(i) \right).
	\end{align*}
	Now, since $f(i)$ are i.i.d., by the strong law of large number (SLLN); e.g., see~\cite{durrett2019probability}, we have $$\frac{1}{k} \sum_{i=0}^{k-1} \log f(i) \to \mathbb{E}[\log f(0)]$$ as $k \to \infty$ with probability one.
	Now, using the fact that the logarithmic function is strictly concave, Jensen's inequality yields
	\begin{align*}
		\mathbb{E}[\log(f(0))] 
		&< \log \mathbb{E}[f(0)]\\
		&= \log (1+\overline{\delta}-\overline{d_I} -K\overline{v}) \\
		&\leq \log (1+\overline{\delta} -K\overline{v}).
	\end{align*}
	where the last inequality hold by the monotonicity of logarithmic function. Hence,
	$\log (1+\overline{\delta} -K\overline{v}) \leq 0$
	for any $K \geq  \delta_{\max}/v_{\min} \geq \overline{\delta}/\overline{v}$.   
	Therefore, $\mathbb{E}[\log f(0)] < 0$, which
	implies that $\lim_{k \to \infty} I(k)/I_0 \to 0$ and the proof for part~$(i)$ is complete.
	
	To prove $(ii)$,
	we must show that the $I(k)$ is upper bounded by the initial infected cases $I_0$. 
	As seen in the proof of part~$(i)$ of Lemma~\ref{lemma: I_k solution under linear policy}, we have, for all $k$,
	\begin{align}  
		I(k) \leq \prod_{i=0}^{k-1}(1+\delta_{\max}-K v_{\min})I_0 \leq I_0
	\end{align}
	where the last inequality holds by using the assumed fact that $K \geq \frac{\delta_{\max}}{v_{\min}}$. \qedhere
	
	%
\end{proof}

\subsubsection{Practical Considerations and Sub-Optimal Feedback Gain} \label{Remarks on Practical Considerations}
In practice, since the medical resource is limited, it is natural to put additional constraints on the feedback gain such as $K\in[0, L]$ with $L\geq 1$ being the constant which corresponds to the maximum allowable  medical resources to be used. 
Hence, other than the sufficient condition~\ref{eq: K_k condition}, for any $\delta_{\max} < \frac{v_{\min}(1-d_{\max})}{v_{\max}}$, one might require~$K$ to satisfy
\begin{align} \label{eq: optimal K}
	K \in \mathcal{K}:= \left( \frac{\delta_{\max}}{v_{\min}}, \,  \frac{1-d_{\max}}{v_{\max}}\right)  \cap [0,L].
\end{align}
However, one should note that the set $\mathcal{K}$ might be an empty set. 
To see this, we consider $L=1$, $v_{\min}=v_{\max}= 0.01$ and $\delta_{\max}= 0.1$ and $d_{\max} = 0.01$. Then it is readily verified that the condition	$
\delta_{\max} < \frac{v_{\min}(1-d_{\max})}{v_{\max}}
$ but
$
\mathcal{K}  = \left( 10, \,  99\right)\cap [0,1] = \emptyset. 
$
%
%
If $\mathcal{K} = \emptyset$, it tells us that the infected cases does not converge to zero asymptotically and the epidemic is \textit{uncontrollable}. If this is the case, one should put whatever they can to suppress the disease; e.g., by putting $K:=L$ for possible $L\geq 1$.

On the other hand, given the fact that the higher feedback gain would cause a higher consumption on medical/economical resources, one should choose the lowest feedback gain which is nonzero. Hence,
an immediate ``sub-optimal" choice would simply be  as follows: If $\delta_{\max}\leq v_{\min}$  we take
$
K^*:={\delta_{\max}}/{v_{\min}}
$
with the aim that $I(k) \leq I_0$ can be guaranteed at least.
On the other hand, if $\delta_{\max}>v_{\min}$, we take
$
K^*:= L.
$
Or, we can write it in a more compact way as follows:
\begin{align}
	K^* := \frac{\delta_{\max}}{v_{\min}}\cdot 1_{\{ \delta_{\max}\leq L v_{\min}\} } + L\cdot 1_{\{\delta_{\max}> L v_{\min}\} }
\end{align}
provided that $\delta_{\max} < \frac{v_{\min}(1-d_{\max})}{v_{\max}}$ where $1_{\{ \delta_{\max}\leq L v_{\min}\} }$ and $1_{\{\delta_{\max}> L v_{\min}\} }$ are the indicator function; see  \cite{gubner2006probability} for a detailed discussion on this topic. 
As seen later in Section~\ref{Section: Examples Using Historical Data: Taiwan and United State}, the $K^*$ above will be adopted to test the epidemic control  performance using historical data.


\subsection{Recovered Cases, Susceptible Cases, and Deceased Cases}
In this subsection, the recovered cases $R(k)$, susceptible cases $S(k)$, and deceased cases $D(k)$ under linear control policy are discussed.
The analysis of recovered cases is simple. We begin by recalling Lemma~\ref{lemma: Positivity of recovered Cases} and write
$
R(k) = \sum_{i = 0}^{k-1} v(i)u(i).
$
Now, by taking linear control policy $u(k)=KI(k)$ with  gain $0 \leq K \leq {(1-d_{\max})}/{v_{\max}}$, we have $R(k)\geq 0$ for all $k$ with probability one and
\begin{align*}
	R(k) 
	&\leq v_{\max} K  \sum_{i = 0}^{k-1}   (1+\delta_{\max}-K v_{\min})^i I_0
\end{align*}
where the last inequality holds by using Lemma~\ref{lemma: I_k solution under linear policy}.

\begin{lemma}[Susceptible Cases Under Linear Policy]\label{lemma: s_k solution via linear policy}
	Any linear  control policy of the form $u(k)=KI(k)$ with gain $0 \leq K \leq {(1-d_{\max})}/{v_{\max}}$ leads to
	\begin{align*} 
		S(k) 
		= S_0 -\delta(0) I_0- \sum_{i = 1}^{k-1} \delta(i) \left(  \prod_{j=0}^{i-1}(1+\delta(j) - d_I(j)-K v(j))I_0  \right) \geq 0
	\end{align*}
	for $k=1,2,\dots, \floor{S_0/(I_0\delta_{\max})}$	with probability one where $\floor{z}$ is the floor function satisfying $$\floor{z}:=\max\{n\in \mathbb{Z}: n\leq z\}.$$
\end{lemma}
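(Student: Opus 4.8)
The plan is to split the claim into two parts: first establish the closed-form expression for $S(k)$ under the linear policy, and then verify nonnegativity over the stated range of $k$. For the closed form, I would start from the inductive identity already recorded in the proof of Lemma~\ref{lemma: number of susceptible cases}, namely $S(k) = S_0 - \sum_{i=0}^{k-1}\delta(i)I(i) = S_0 - \delta(0)I_0 - \sum_{i=1}^{k-1}\delta(i)I(i)$, and then substitute the product representation of the infected cases under the linear policy. Recall from the proof of Lemma~\ref{lemma: I_k solution under linear policy} that $u(k)=KI(k)$ turns the infected-case recursion into $I(i)=\prod_{j=0}^{i-1}(1+\delta(j)-d_I(j)-Kv(j))I_0$. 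Inserting this product into the sum reproduces exactly the displayed formula, so this half of the lemma is a direct substitution requiring no new estimates.

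For the nonnegativity claim, I would work from $S(k) = S_0 - \sum_{i=0}^{k-1}\delta(i)I(i)$ and bound the subtracted sum from above. Since $\delta(i)\le\delta_{\max}$ with probability one and each $I(i)\ge 0$ (part~(i) of Lemma~\ref{lemma: I_k solution under linear policy}), we have $\sum_{i=0}^{k-1}\delta(i)I(i)\le \delta_{\max}\sum_{i=0}^{k-1}I(i)$. If one can guarantee the uniform bound $I(i)\le I_0$ for every $i$, then $\sum_{i=0}^{k-1}\delta(i)I(i)\le k\,\delta_{\max}I_0$, which is at most $S_0$ precisely when $k\le S_0/(I_0\delta_{\max})$, i.e.\ for $k\le\floor{S_0/(I_0\delta_{\max})}$; the standing assumption $I_0\delta_{\max}<S_0$ ensures this range is nonempty. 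Monotonicity of $S$---it is non-increasing because $S(k+1)-S(k)=-\delta(k)I(k)\le 0$---then delivers $S(k)\ge 0$ throughout the range.

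The crux of the argument is therefore the uniform estimate $I(i)\le I_0$, and this is where I expect the main difficulty. Under the hypothesis $0\le K\le(1-d_{\max})/v_{\max}$ alone, the factors $f(i)=1+\delta(i)-d_I(i)-Kv(i)$ can exceed $1$ (for instance when $K=0$), so $I(i)$ may grow geometrically and the clean bound $k\,\delta_{\max}I_0$ fails. The estimate $I(i)\le I_0$ is exactly the content of part~(ii) of Theorem~\ref{theorem: Linear Controlled Infected Cases}, and it requires the stronger lower bound $K\ge\delta_{\max}/v_{\min}$ on the gain. I would therefore either (a)~prove nonnegativity in the controlled regime $\delta_{\max}/v_{\min}\le K\le(1-d_{\max})/v_{\max}$, where $I(i)\le I_0$ is available and the threshold $\floor{S_0/(I_0\delta_{\max})}$ is exactly right, or (b)~if the weaker hypothesis is to be retained, replace that threshold by one reflecting the geometric growth of $I(i)$, using the bound $I(i)\le(1+\delta_{\max}-Kv_{\min})^iI_0$ from Lemma~\ref{lemma: I_k solution under linear policy} and summing the resulting geometric series. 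Reconciling the stated range with the stated hypothesis is the key point to settle before the remaining computations become routine.
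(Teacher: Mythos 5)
Your proposal follows the paper's route exactly on the first half: the paper likewise starts from the inductive identity $S(k)=S_0-\sum_{i=0}^{k-1}\delta(i)I(i)$ and substitutes the product form of $I(i)$ from Lemma~\ref{lemma: I_k solution under linear policy}, so that part is a match. For nonnegativity, the paper bounds $I(i)\le(1+\delta_{\max}-Kv_{\min})^i I_0$, sums the geometric series, and asserts the result ``via a lengthy but straightforward calculation''---but its case analysis covers only $\delta_{\max}<Kv_{\min}$ and $\delta_{\max}=Kv_{\min}$. In those cases each factor $(1+\delta_{\max}-Kv_{\min})^i\le 1$, the subtracted sum is at most $k\,\delta_{\max}I_0$, and the stated range follows; this is precisely your uniform-bound argument (and under $K\ge\delta_{\max}/v_{\min}$ you can get $I(i)\le I_0$ directly from Lemma~\ref{lemma: I_k solution under linear policy}(i), without invoking Theorem~\ref{theorem: Linear Controlled Infected Cases}(ii)).

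The difficulty you flag is therefore real, and your diagnosis is sharper than the paper's own treatment: the regime $\delta_{\max}>Kv_{\min}$, which the lemma's hypothesis $0\le K\le(1-d_{\max})/v_{\max}$ permits, is silently omitted from the paper's proof, so the paper in effect proves the lemma only under your remedy~(a), i.e.\ $K\ge\delta_{\max}/v_{\min}$. Indeed the statement as written can fail: take $K=0$ with $\delta_{\max}>d_{\max}$ and all $\delta(i)$ near $\delta_{\max}$ (an event of positive probability, since $\delta_{\max}$ lies in the support of $\delta(k)$); then $I(i)$ grows geometrically and $S(k)$ turns negative after roughly $\log(1+S_0/I_0)/\log(1+\delta_{\max}-d_{\max})$ steps, which for large $S_0/I_0$ is far short of $\floor{S_0/(I_0\delta_{\max})}$, contradicting the almost-sure claim over the stated range. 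Either of your fixes repairs the lemma; option~(a) matches what the paper actually establishes and is the regime used downstream (compare Lemma~\ref{lemma: control of upper bounds on death}, which does assume $K\ge\delta_{\max}/v_{\min}$), while option~(b) would retain the weak hypothesis at the cost of a logarithmic, rather than linear, horizon.
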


\begin{proof} 
	Similar to the proof of Lemma~\ref{lemma: number of susceptible cases}, recalling that $S(k+1)=S(k)-\delta(k)I(k)$, a straightforward inductive calculation leads to
	\begin{align*} 
		S(k) 
		&=   S_0 - \sum_{i = 0}^{k-1} \delta(i) I(i)\\
		&= S_0 -\delta(0) I_0- \sum_{i = 1}^{k-1} \delta(i) I(i).
	\end{align*}
	Using Lemma~\ref{lemma: I_k solution under linear policy}, the desired form of solution follows immediately. To complete the proof, we note that
	\begin{align*}
		S(k) &= S_0 -\delta(0) I_0- \sum_{i = 1}^{k-1} \delta(i) I(i)\\
		&\geq S_0 -\delta_{\max} I_0- \delta_{\max} \sum_{i = 1}^{k-1} (1+\delta_{\max}-K v_{\min})^i I_0\\
		&=\begin{cases}
			\frac{\delta_{\max} I_0  (\delta_{\max}-K v_{\min}+1)^k-\delta_{\max} (I_0 +S_0)+K S_0 v_{\min}}{K v_{\min}-\delta_{\max}}, & \delta_{\max}<Kv_{\min};\\
			S_0 - I_0  \delta_{\max}k & \delta_{\max} = Kv_{\min}.
		\end{cases}
	\end{align*}
	Thus, via a lengthy but straightforward calculations, it is readily verified that $S(k)\geq 0$ for $k\leq \floor{S_0/(I_0\delta_{\max})}$ with probability~one.
	%
\end{proof}

\textbf{Remark.} While the lemma above tells us that $S(k)\geq 0$ holds up to $k\leq \floor{S_0/(I_0\delta_{\max})}$, we should note that the initial cases~\mbox{$S_0=N(0)$} are often far larger than the denominator $I_0\delta_{\max}$. Hence, without loss of generality, in the sequel, we deemed that $S(k)\geq 0$ for sufficiently large $k$. 
The next lemma indicates that the deceased cases under the linear control policy.

\begin{lemma}[Upper Bound on The Deceased Cases Under Linear Policy] \label{lemma: control of upper bounds on death} 	Any linear policy of the form $u(k)=KI(k)$ with gain $\delta_{\max}/v_{\min} \leq K \leq {(1-d_{\max})}/{v_{\max}}$ leads to the deceased cases satisfying
	\begin{align*}
		D(k)  
		&\leq \begin{cases}
			\frac{d_{\max} I_0  \left(1-(\delta_{\max}-K v_{\min}+1)^k \right)}{K v_{\min}-\delta_{\max}}, & \delta_{\max}  < K v_{\min};\\
			d_{\max} I_0 k, & \delta_{\max}= K v_{\min}.
		\end{cases}
	\end{align*}
	for all $k\geq 1$ with probability one.
\end{lemma}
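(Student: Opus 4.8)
The plan is to read off the bound directly from the two preceding lemmas and then evaluate an elementary geometric sum. Lemma~\ref{lemma: number of deaths} already gives the exact expression $D(k)=\sum_{i=0}^{k-1} d_I(i)I(i)$, while under the linear policy $u(k)=KI(k)$ with $K\leq (1-d_{\max})/v_{\max}$, Lemma~\ref{lemma: I_k solution under linear policy} supplies the pointwise bound $0\leq I(i)\leq (1+\delta_{\max}-Kv_{\min})^i I_0$ with probability one. So the first step is simply to substitute: using $d_I(i)\leq d_{\max}$ together with the nonnegativity of $I(i)$, I would write
\[
D(k) \;\leq\; d_{\max}\sum_{i=0}^{k-1} I(i) \;\leq\; d_{\max} I_0 \sum_{i=0}^{k-1} \bigl(1+\delta_{\max}-Kv_{\min}\bigr)^i
\]
with probability one, for every $k\geq 1$.

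The second step is to evaluate the geometric sum after setting $r:=1+\delta_{\max}-Kv_{\min}$. Before summing, I would record that the admissible range $\delta_{\max}/v_{\min}\leq K\leq (1-d_{\max})/v_{\max}$ pins down $r$: the upper bound on $K$ gives $Kv_{\min}\leq Kv_{\max}\leq 1-d_{\max}<1$, whence $r>\delta_{\max}\geq 0$; and the lower bound $K\geq \delta_{\max}/v_{\min}$ is exactly $Kv_{\min}\geq \delta_{\max}$, i.e. $r\leq 1$. This splits the computation into the two cases of the statement. When $\delta_{\max}<Kv_{\min}$ one has $0\leq r<1$, so $\sum_{i=0}^{k-1} r^i=(1-r^k)/(1-r)$; since $1-r=Kv_{\min}-\delta_{\max}$, this produces the first branch $d_{\max}I_0\bigl(1-(1+\delta_{\max}-Kv_{\min})^k\bigr)/(Kv_{\min}-\delta_{\max})$. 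When $\delta_{\max}=Kv_{\min}$ one has $r=1$, so $\sum_{i=0}^{k-1} r^i=k$, giving the second branch $d_{\max}I_0 k$.

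There is no genuine obstacle here: the substantive content is entirely in Lemmas~\ref{lemma: number of deaths} and~\ref{lemma: I_k solution under linear policy}, and what remains is the standard partial geometric-sum formula. The only points that warrant care are confirming that the common ratio $r$ is nonnegative, so that the termwise bound on $I(i)$ is applied in the correct direction and the sum is a sum of nonnegative powers, and matching the two cases to the correct sign of the denominator, since $1-r=Kv_{\min}-\delta_{\max}$ is strictly positive precisely in the first case and vanishes in the second. Both of these are exactly what the hypothesized range of $K$ guarantees.
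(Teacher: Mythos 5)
Your proposal is correct and follows essentially the same route as the paper's own proof: it invokes Lemma~\ref{lemma: number of deaths} for the exact expression $D(k)=\sum_{i=0}^{k-1}d_I(i)I(i)$, bounds each $I(i)$ via Lemma~\ref{lemma: I_k solution under linear policy}, and then evaluates the resulting geometric sum in the two cases $\delta_{\max}<Kv_{\min}$ and $\delta_{\max}=Kv_{\min}$. Your additional verification that the ratio $r=1+\delta_{\max}-Kv_{\min}$ lies in $[0,1]$ under the assumed range of $K$ is a slightly more explicit version of the paper's remark that $K\leq(1-d_{\max})/v_{\max}$ keeps $I(k)$ nonnegative, so the argument matches in substance.
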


\begin{proof}
	We begin by recalling that $	D(k)  =  \sum_{i = 0}^{k-1} d_I(i)
	I(i) $ for all $k\geq 1$. Hence, using Lemma~\ref{lemma: I_k solution under linear policy}, we have
	\begin{align*}
		D(k)  &= \sum_{i = 0}^{k-1} d_I(i)
		I(i)\\
		&\leq d_{\max} I_0 \sum_{i = 0}^{k-1} 
		(1+\delta_{\max} -K v_{\min})^i.
	\end{align*}
	Note here that the assumption $K \leq (1-d_{\max})/v_{\max}$ assures $I(k)\geq 0$ for all~$k$ with probability one; hence the inequality above is well-defined.
	To complete the proof, with the aid of sum of geometric series, we have
	\begin{align*}
		D(k)  
		&\leq d_{\max} \sum_{i = 0}^{k-1} 
		(1+\delta_{\max} -K v_{\min})^i I_0\\
		&=\begin{cases}
			\frac{I_0 d_{\max} \left(1-(\delta_{\max}-K v_{\min}+1)^k \right)}{K v_{\min}-\delta_{\max}}, & \delta_{\max}  < K v_{\min};\\
			d_{\max} I_0 k, & \delta_{\max}= K v_{\min}
		\end{cases}
	\end{align*}
	which completes the proof.
\end{proof}

\subsection{Almost Sure Epidemic Control Policy: A Revisit} With the aids of Theorem~\ref{theorem: Linear Controlled Infected Cases} and Lemma~\ref{lemma: control of upper bounds on death}, we see that if we take linear feedback policy with constant gain; i.e., $u(k)=KI(k)$ and assuming that $\delta_{\max}<v_{\min}(1-d_{\max})/v_{\max}$ and $K\in \left( \frac{\delta_{\max}}{v_{\min}}, \,  \frac{1-d_{\max}}{v_{\max}}\right)$, then we have: $(i)$ infected cases converges to zero asymptotically with probability one. $(ii)$ there exists a constant $M_I:=I_0>0$ such that infected cases $I(k)\leq M_I$  for all $k$ with probability one. $(iii)$  there exists a constant $$M_D:=\frac{I_0 d_{\max} }{K v_{\min}-\delta_{\max}}>0$$ such that infected cases $I(k)\leq M_D$ for all $k$ with probability one. Therefore, the \textit{almost sure epidemic control problem} is solved. 

While the almost sure epidemic control policy, if exists, can be a good candidate to mitigate the pandemic,  some potential issues remain in practice. That is,
recalling Remark~\ref{Remarks on Practical Considerations} in Section~\ref{Section:Control of Epidemics in Almost Sure Sense}, it tells us that, in some cases, the almost sure epidemic control  policy, while exist, may not be possible to implement.
To address this, as mentioned in Section~\ref{Section: Preliminaries and Problem Formulation}, we now move to our second epidemic control  problem which we call the \textit{average epidemic control  problem}. In this setting, the  aim now becomes to control the ``expected" infected cases and ``expected" deceased cases

\section{Control of Epidemics in the Sense of Expected Value} \label{Section: Control of Epidemics in Expected Value}
In this section, we provide our results on control of epidemics in the sense of expected value. We begin with discussing the control of expected infected cases.

\subsection{Control of Expected Infected Cases}
The lemma below provides an useful analytical expression for the expected number of infected cases.

\begin{lemma}[Expected Value of Infected Cases]\label{lemma: Expected c_k} 
	The expected number of infected cases is given by
	$
	\mathbb{E}[I(k)] = 	(1+\overline{\delta}- \overline{d_I}-K \overline{v})^k I_0
	$
	for any linear policy $u(k)=KI(k)$ with gain $K \in \left[0, {(1-d_{\max})}/v_{\max} \right]$.
\end{lemma}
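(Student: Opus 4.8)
The plan is to start from the explicit product representation of the infected cases already established under the linear feedback policy, and then take expectations, mimicking the argument used in part~$(ii)$ of Lemma~\ref{lemma: upper bound on Phi_c}. Concretely, from the proof of Lemma~\ref{lemma: I_k solution under linear policy}, substituting $u(k)=KI(k)$ collapses the infected-case recursion to $I(k+1)=(1+\delta(k)-d_I(k)-Kv(k))I(k)$, so that
$$
I(k)=\prod_{i=0}^{k-1} f(i)\,I_0, \qquad f(i):=1+\delta(i)-d_I(i)-Kv(i),
$$
for $k\geq 1$. Since $I_0$ is deterministic, the first step is simply to pull it out of the expectation and focus on $\mathbb{E}\bigl[\prod_{i=0}^{k-1} f(i)\bigr]$.

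The key step is the factorization of this expectation into a product of expectations. This is where I would lean on the modeling assumptions: each of the sequences $\delta(\cdot)$, $d_I(\cdot)$, and $v(\cdot)$ is i.i.d.\ in the time index, and the three sequences are mutually independent. Consequently, for distinct indices $i$ the factors $f(i)$ are functions of disjoint collections of jointly independent random variables, hence are themselves independent. This independence across $i$ licenses the interchange
$$
\mathbb{E}\Bigl[\prod_{i=0}^{k-1} f(i)\Bigr]=\prod_{i=0}^{k-1}\mathbb{E}[f(i)],
$$
exactly as in the computation of $\mathbb{E}[\Phi_I(k,k_0)]$ in Lemma~\ref{lemma: upper bound on Phi_c}.

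The remaining step is a routine computation of a single factor. By linearity of expectation and the shorthand $\overline{\delta}=\mathbb{E}[\delta(i)]$, $\overline{d_I}=\mathbb{E}[d_I(i)]$, $\overline{v}=\mathbb{E}[v(i)]$ (all independent of $i$ by the i.i.d.\ assumption), each factor equals $\mathbb{E}[f(i)]=1+\overline{\delta}-\overline{d_I}-K\overline{v}$. Multiplying $k$ identical factors and reinstating $I_0$ then yields the claimed formula $\mathbb{E}[I(k)]=(1+\overline{\delta}-\overline{d_I}-K\overline{v})^k I_0$.

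I do not anticipate any serious obstacle here: the only point that requires care is verifying that the factorization of the expectation of the product is legitimate, and this follows directly from the mutual independence and i.i.d.\ hypotheses placed on $\delta(\cdot)$, $d_I(\cdot)$, and $v(\cdot)$ in Section~\ref{Section: Preliminaries and Problem Formulation}. The gain restriction $K\in[0,(1-d_{\max})/v_{\max}]$ plays no role in the algebra of the expectation itself; it is inherited from Lemma~\ref{lemma: I_k solution under linear policy} merely to guarantee that the product representation corresponds to nonnegative, physically meaningful infected cases.
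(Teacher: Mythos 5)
Your proposal is correct and matches the paper's own proof essentially step for step: both use the product representation $I(k)=\prod_{i=0}^{k-1}(1+\delta(i)-d_I(i)-Kv(i))I_0$ from Lemma~\ref{lemma: I_k solution under linear policy}, invoke the i.i.d.\ and mutual-independence assumptions to factor the expectation of the product, and evaluate each factor by linearity. Your closing observation that the gain restriction $K\in[0,(1-d_{\max})/v_{\max}]$ serves only to guarantee nonnegativity of $I(k)$, not the expectation algebra, is also consistent with the paper's treatment.
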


\begin{proof} The proof is straightforward. We begin by noting that, for  any linear policy $u(k)=KI(k)$ with gain $K\in \left[0, {(1-d_{\max})}/v_{\max} \right]$, $I(k) \geq 0$ for all $k$ with probability one and
	$
	I(k) = \prod_{i=0}^{k-1}(1+\delta(i) - d_I(i)-K v(i))I_0.
	$
	Since $\delta(k), d_I(k)$ and~$v(k)$ are i.i.d in~$k$ and are mutually independent, taking the expected value on the $I(k)$ above yields
	\begin{align*}
		\mathbb{E}[I(k)] 
		&= \prod_{i=0}^{k-1}	\mathbb{E}[1+\delta(i)-d_I(i)-K v(i)]I_0 \\
		& =  	\left( 1+\overline{\delta}-\overline{d_I}-K \overline{v} \right)^kI_0
	\end{align*}
	and the proof is complete.   
\end{proof}

With the aid of the lemma above, we are now ready to provide our second main result.

\begin{theorem}[Control of Expected Infected Cases]\label{theorem: Linear Controlled on Expected Infected Cases} If $\overline{\delta}< \frac{(1-d_{\max})\overline{v}}{v_{\max}}$ and the feedback gain satisfies
	\begin{align}  \label{eq: K_k condition in expected value sense}
		\frac{\overline{\delta}-\overline{d_I}}{\overline{v}} <  K < \frac{1-d_{\max}}{v_{\max}},
	\end{align} 
	then we have
	
	(i) 
	$
	\lim_{k\to \infty} \mathbb{E}[I(k)]/I_0 =0.
	$
	
	(ii)  
	The expected infected cases are upper bounded uniformly by initial infected cases; i.e.,
	$
	\mathbb{E}[I(k)] \leq I_0
	$
	for all $k\geq 0$.
	
	(iii) If $K:=(\overline{\delta}-\overline{d_I})/\overline{v}$, then we have
	$
	\mathbb{E}[I(k)] = I_0
	$ for all $k$.

\end{theorem}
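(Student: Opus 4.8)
The plan is to reduce all three claims to an elementary analysis of the scalar geometric sequence supplied by Lemma~\ref{lemma: Expected c_k}. Since the stated gain range $\frac{\overline{\delta}-\overline{d_I}}{\overline{v}} < K < \frac{1-d_{\max}}{v_{\max}}$ sits inside the admissible interval $[0,(1-d_{\max})/v_{\max}]$ required by that lemma, I may write
\[
\frac{\mathbb{E}[I(k)]}{I_0} = g^k, \qquad g := 1 + \overline{\delta} - \overline{d_I} - K\overline{v}.
\]
Everything then hinges on locating $g$ relative to the interval $(0,1)$.

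First I would establish the two-sided bound $0 < g < 1$. The strict upper bound $g < 1$ is immediate: the hypothesis $K > (\overline{\delta} - \overline{d_I})/\overline{v}$ gives $K\overline{v} > \overline{\delta} - \overline{d_I}$, hence $\overline{\delta} - \overline{d_I} - K\overline{v} < 0$. For the lower bound $g > 0$ I would invoke the distributional bounds from Section~\ref{Section: Preliminaries and Problem Formulation}, namely $\overline{v} \leq v_{\max}$ and $\overline{d_I} \leq d_{\max}$. From $K < (1-d_{\max})/v_{\max}$ together with $\overline{v} \leq v_{\max}$ one obtains $K\overline{v} < 1 - d_{\max} \leq 1 - \overline{d_I}$, so that $g = 1 - \overline{d_I} - K\overline{v} + \overline{\delta} > \overline{\delta} \geq 0$. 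This positivity step---translating the gain constraint, which is phrased via the worst-case bounds, into a statement about the means---is the only place the argument demands any care. It is the deterministic analogue of the positivity check in the proof of Theorem~\ref{theorem: Linear Controlled Infected Cases}, but here it requires neither the strong law of large numbers nor Jensen's inequality, since $\mathbb{E}[I(k)]$ is already in closed product form.

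With $0 < g < 1$ in hand, the three parts follow directly. For part~$(i)$, $g^k \to 0$ as $k \to \infty$, giving $\lim_{k\to\infty}\mathbb{E}[I(k)]/I_0 = 0$. For part~$(ii)$, the inequality $0 < g < 1$ forces $g^k \leq g^0 = 1$ for every $k \geq 0$, i.e.\ $\mathbb{E}[I(k)] \leq I_0$. For part~$(iii)$, setting $K = (\overline{\delta} - \overline{d_I})/\overline{v}$ makes $K\overline{v} = \overline{\delta} - \overline{d_I}$ exactly, so $g = 1$ and $\mathbb{E}[I(k)] = 1^k \cdot I_0 = I_0$ for all $k$. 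I would close by noting that this boundary case confirms the role of the strict lower inequality on $K$ in parts~$(i)$--$(ii)$: it is precisely what pushes $g$ strictly below $1$ and thereby drives the expected infected ratio to zero.
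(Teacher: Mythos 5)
Your proposal is correct and takes essentially the same route as the paper: both reduce the theorem via Lemma~\ref{lemma: Expected c_k} to the closed form $\mathbb{E}[I(k)]/I_0 = \left(1+\overline{\delta}-\overline{d_I}-K\overline{v}\right)^k$ and then locate the base in $(0,1)$, with the paper's $\exp\left(k\log \mathbb{E}[f(0)]\right)$ manipulation and the condition $\log \mathbb{E}[f(0)]<0$ being exactly your $0<g<1$ in logarithmic dress. If anything, your explicit lower-bound check $g > \overline{\delta} \geq 0$ (using $K\geq 0$, $\overline{v}\leq v_{\max}$, and $\overline{d_I}\leq d_{\max}$) spells out what the paper merely asserts when it says the logarithm is ``well-defined for $K$ within the assumed range.''
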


\begin{proof} 
	The idea of the proof is similar to the one used in Theorem~\ref{theorem: Linear Controlled Infected Cases}.	However, for the sake of completeness, we provide our full proof here. 
	We begin by assuming that
	$\overline{\delta}< \frac{(1-d_{\max})\overline{v}}{v_{\max}}$ and
	$ 
	\frac{\overline{\delta}-\overline{d_I}}{\overline{v}} <  K < \frac{1-d_{\max}}{v_{\max}}.
	$
	Since $K <  (1-d_{\max})/v_{\max}$, it implies that $I(k) > 0$ for all $k$ with probability one.
	Now, according to Lemma~\ref{lemma: Expected c_k}, we have
	\begin{align}\label{eq: E_I_k}
		\mathbb{E}[I(k)] =  (\mathbb{E}[f(0)])^k	I_0
	\end{align}
	where $\mathbb{E}[f(0)] = 1+\overline{\delta}-\overline{d_I}-K \overline{v}.$
	To see the desired limiting result,
	%
	we now write
	$
	(\mathbb{E}[f(0)])^k = \exp\left(  k\log \mathbb{E}[f(0)] \right).
	$
	Hence, 
	\begin{align*}
		\lim_{k \to \infty} \frac{\mathbb{E}[I(k)]}{I_0} 
		&=  \exp\left( \lim_{k \to \infty} k \log \mathbb{E}[f(0)] \right).
	\end{align*}
	Note that 
	\begin{align*}
		\log \mathbb{E}[f(0)] 
		&= \log (1+\overline{\delta}-\overline{d_I} -K\overline{v}).
	\end{align*}
	Hence, $\log (1+\overline{\delta} -\overline{d_I} -K\overline{v}) < 0$
	for any $K > (\overline{\delta}-\overline{d_I})/\overline{v} $ and the logarithmic function is well-defined for $K$ within the assumed range. Therefore, we have $\log \mathbb{E}[f(0)]<0$, which implies that $\lim_{k \to \infty} \mathbb{E}[I(k)]/I_0 \to 0$ and the proof for part~$(i)$ is complete.

	To prove part~$(ii)$, we fix $k$ and simply note  that, with the assumed assumptions on $\overline{\delta}$ and~$K$ and the fact that $d_I(\cdot)\geq 0$, it is readily verified that $\mathbb{E}[I(k)]\leq I_0$, which completes the proof of part~$(ii)$.
	
	To  prove part~$(iii)$, take $K=(\overline{\delta}-\overline{d_I})/\overline{v}$ and substitute it back into the equation~\ref{eq: E_I_k}, we obtain $\mathbb{E}[I(k)]=I_0$ for all $k$
	and  the proof of part~$(iii)$ is complete.
\end{proof}

\subsubsection{Remarks on Practical Considerations} 
Similar to Remark~\ref{Remarks on Practical Considerations}, to accommodate the practical considerations, we fix $L\geq 1$ and require
\begin{align}\label{cond: K_k condition}
	K \in \left( \frac{\overline{\delta}-\overline{d_I}}{\overline{v}}, \frac{1-d_{\max}}{v_{\max}} \right) \cap [0, L].
\end{align}
Due to the limited available medical resources, it is reasonable to choose a ``sub-optimal" $K$ in the sense of minimizing the potential use of medical resources. 
For example, we let~$1_{\{ \overline{\delta} \leq L \overline{v}\}}$ and $1_{\{\overline{\delta} > L \overline{v}\}}$ be the indicator functions and~consider
\begin{align}
	{K}^* := K = \frac{\overline{\delta}}{\overline{v}}\cdot 1_{\{ \overline{\delta} \leq L\overline{v}\} } + L\cdot 1_{\{\overline{\delta} > L \overline{v}\} }
\end{align}
provided that $\overline{\delta} < \frac{\overline{v}(1-d_{\max})}{v_{\max}}$
to be our choice of feedback gain for controlling of epidemics in the sense of expected value. Note that $K^*> \frac{\overline{\delta} - \overline{d_I}}{\overline{v}}$ for $\overline{d_I}>0$; hence, the theorem above applies.  This usage of $K^*$ will be also seen later in Section~\ref{Section: Examples Using Historical Data: Taiwan and United State}.



\subsection{Recovered Cases, Susceptible Cases, and Deceased Cases}
In this subsection, the expected recovered cases $\mathbb{E}[R(k)]$, expected susceptible cases $\mathbb{E}[S(k)]$, and expected deceased cases $\mathbb{E}[D(k)]$ under linear control policy are discussed.

The analysis of recovered cases is simple. We begin by recalling Lemma~\ref{lemma: Positivity of recovered Cases} and write
$
R(k) = \sum_{i = 0}^{k-1} v(i)u(i).
$
Now, by taking linear control policy $u(k)=KI(k)$ with  gain $0 \leq K \leq \frac{1-d_{\max}}{v_{\max}}$ for all $k$, we have
\begin{align*}
	\mathbb{E}[R(k)] &= \sum_{i = 0}^{k-1} \mathbb{E}[v(i)K I(i)] \\
	&\leq v_{\max} K \sum_{i = 0}^{k-1}  \mathbb{E}[I(i)] \\
	&= v_{\max} K \sum_{i = 0}^{k-1}   	(1+\overline{\delta}- \overline{d_I}-K \overline{v})^iI_0
\end{align*}
where the last inequality holds by using Lemma~\ref{lemma: Expected c_k}. The next lemma states the expected susceptible cases.

\begin{lemma}
	For $k= 1,2,\dots, \floor{S_0/(\delta_{\max}I_0)}$, any linear control policy of the form $u(k) = K I(k)$ with $0\leq K \leq (1-d_{\max})/v_{\max}$ yields the expected susceptible cases
	$$
	\mathbb{E}[S(k)] 
	= \begin{cases}
		S_0 - \frac{\overline{\delta}  I_0 \left(1-(1+\overline{\delta} -\overline{d_I}-K \overline{v})^k\right)}{K\overline{v}-\overline{\delta} +\overline{d_I}}, & K\overline{v}> \overline{\delta} -\overline{d_I};\\
		S_0 - I_0  \overline{\delta}k, & K\overline{v}  = \overline{\delta} -\overline{d_I}
	\end{cases}
	$$
\end{lemma}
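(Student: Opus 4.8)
The plan is to reduce everything to the geometric-series computation of $\mathbb{E}[I(i)]$ already obtained in Lemma~\ref{lemma: Expected c_k}. First I would start from the inductive identity established in the proof of Lemma~\ref{lemma: number of susceptible cases}, namely $S(k) = S_0 - \sum_{i=0}^{k-1}\delta(i)I(i)$, which holds under the linear policy $u(k)=KI(k)$ exactly as in Lemma~\ref{lemma: s_k solution via linear policy}. Taking expectations and invoking linearity gives $\mathbb{E}[S(k)] = S_0 - \sum_{i=0}^{k-1}\mathbb{E}[\delta(i)I(i)]$, so the whole problem collapses to evaluating each cross term $\mathbb{E}[\delta(i)I(i)]$.

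The key (and only genuinely non-routine) step is to justify the factorization $\mathbb{E}[\delta(i)I(i)] = \overline{\delta}\,\mathbb{E}[I(i)]$. For this I would use the closed form $I(i) = \prod_{j=0}^{i-1}(1+\delta(j)-d_I(j)-Kv(j))I_0$ from Lemma~\ref{lemma: I_k solution under linear policy}, observing that $I(i)$ is a function only of the variables $\{\delta(j),d_I(j),v(j): 0\le j\le i-1\}$ and is therefore independent of $\delta(i)$, by virtue of the standing assumption that $\delta(\cdot)$ is i.i.d.\ and that $\delta,d_I,v$ are mutually independent (for $i=0$ this is trivial since $I(0)=I_0$ is deterministic). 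Consequently $\mathbb{E}[\delta(i)I(i)] = \mathbb{E}[\delta(i)]\mathbb{E}[I(i)] = \overline{\delta}\,\mathbb{E}[I(i)]$, and substituting $\mathbb{E}[I(i)] = (1+\overline{\delta}-\overline{d_I}-K\overline{v})^i I_0$ from Lemma~\ref{lemma: Expected c_k} yields $\mathbb{E}[S(k)] = S_0 - \overline{\delta}I_0\sum_{i=0}^{k-1}(1+\overline{\delta}-\overline{d_I}-K\overline{v})^i$.

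It then remains to sum the finite geometric series with ratio $r:=1+\overline{\delta}-\overline{d_I}-K\overline{v}$. Writing $1-r = K\overline{v}-\overline{\delta}+\overline{d_I}$, the standard formulas $\sum_{i=0}^{k-1}r^i = (1-r^k)/(1-r)$ when $r\neq 1$ and $\sum_{i=0}^{k-1}r^i = k$ when $r=1$ reproduce exactly the two branches of the claimed expression, the dichotomy $r\neq 1$ versus $r=1$ being precisely $K\overline{v}\neq\overline{\delta}-\overline{d_I}$ versus $K\overline{v}=\overline{\delta}-\overline{d_I}$. Finally, I would note that the admissible range $1\le k\le \floor{S_0/(\delta_{\max}I_0)}$ is inherited directly from Lemma~\ref{lemma: s_k solution via linear policy}, where the underlying representation of $S(k)$ and the nonnegativity of the states were established on exactly this interval. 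The main point to be careful about is the independence claim in the second paragraph; aside from that, the argument is a routine expectation-plus-geometric-sum computation.
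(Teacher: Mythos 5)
Your proposal is correct and takes essentially the same route as the paper: both reduce to $\mathbb{E}[S(k)] = S_0 - \sum_{i=0}^{k-1}\mathbb{E}[\delta(i)I(i)]$, factor each cross term via the independence of $\delta(i)$ from the product form of $I(i)$ (which depends only on variables with index $j<i$), and conclude with the same geometric-series dichotomy $r\neq 1$ versus $r=1$ for $r = 1+\overline{\delta}-\overline{d_I}-K\overline{v}$. The only cosmetic difference is that the paper peels off the $i=0$ term as $\overline{\delta}I_0$ before factorizing, while you absorb it into the sum by noting $I(0)=I_0$ is deterministic; your explicit justification of the independence step is in fact slightly more careful than the paper's.
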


\begin{proof} Let $u(k) = K I(k)$ with $0\leq K \leq (1-d_{\max})/v_{\max}$. We begin by recalling that Lemma~\ref{lemma: s_k solution via linear policy} tells us that $S(k)\geq 0$ for stage $k=1,2,\dots, \floor{S_0/(\delta_{\max}I_0)}$.  
	Now	using the facts that $\delta(k)$ are i.i.d. in $k$, and~$\delta$, $d_I$ and $v$ are mutually independent, we~have
	\begin{align*}
		\mathbb{E}[S(k)] 
		&=    S_0-\overline{\delta} I_0 - \sum_{i = 1}^{k-1} \mathbb{E}\left[\delta(i) \left(\prod_{j=0}^{i-1}(1+\delta(j) - d_I(j)-K v(j))I_0 \right)\right]\\
		&=    S_0 -\overline{\delta} I_0 - \sum_{i = 1}^{k-1} \mathbb{E}\left[\delta(i)\right] \mathbb{E}\left[ \prod_{j=0}^{i-1}(1+\delta(j) - d_I(j)-K v(j))I_0 \right]\\
		&=S_0 -\overline{\delta}I_0 \left[ 1  +  \sum_{i = 1}^{k-1}  (1+\overline{\delta}  - \overline{d_I}-K \overline{v} )^i \right].
	\end{align*}
	Using the geometric series, we conclude
	$$
	\mathbb{E}[S(k)] 
	= \begin{cases}
		S_0 - \frac{\overline{\delta}  I_0 \left(1-(1+\overline{\delta} -\overline{d_I}-K \overline{v})^k\right)}{K\overline{v}-\overline{\delta} +\overline{d_I}}, & K\overline{v}> \overline{\delta} -\overline{d_I};\\
		S_0 - I_0  \overline{\delta}k, & K\overline{v}  = \overline{\delta} -\overline{d_I},
	\end{cases}
	$$
	which completes the proof.
\end{proof}


\begin{lemma}[Expected Deceased Cases Under Linear Policy] \label{lemma: control of expected death}
	For $k\geq 1$, any linear feedback control policy of the form $u(k) = K I(k)$ with $0\leq K \leq (1-d_{\max})/v_{\max}$ yields the expected deceased cases 
	\begin{align*}
		\mathbb{E}[D(k)] 
		&= \begin{cases}
			\frac{\overline{d_I} I_0 \left(1-(1+\overline{\delta} -\overline{d_I}-K \overline{v})^k\right)}{K \overline{v}-\overline{\delta} +\overline{d_I}} , & K\overline{v}-\overline{\delta}+\overline{d_I} > 0;\\
			\overline{d_I} I_0 k, & K\overline{v}-\overline{\delta}+\overline{d_I} = 0,
		\end{cases}
	\end{align*}
\end{lemma}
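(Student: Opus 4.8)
The plan is to reduce the claim to the scalar geometric-series identity already used in Lemma~\ref{lemma: control of upper bounds on death}, but now working with \emph{exact} expectations rather than almost-sure bounds. First I would invoke Lemma~\ref{lemma: number of deaths}, which gives the exact representation $D(k) = \sum_{i=0}^{k-1} d_I(i) I(i)$, and apply linearity of expectation to obtain $\mathbb{E}[D(k)] = \sum_{i=0}^{k-1} \mathbb{E}[d_I(i) I(i)]$. The whole argument then hinges on evaluating each summand $\mathbb{E}[d_I(i) I(i)]$ in closed form.

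The key observation---and the step I expect to carry the real content---is an independence/factorization argument. Under the linear policy $u(k) = KI(k)$, Lemma~\ref{lemma: I_k solution under linear policy} gives $I(i) = \prod_{j=0}^{i-1}(1+\delta(j)-d_I(j)-Kv(j))I_0$, whose factors involve only the random variables indexed by $j=0,\dots,i-1$. In particular $d_I(i)$ does not appear in $I(i)$, and since the sequences $\delta(\cdot)$, $d_I(\cdot)$, $v(\cdot)$ are i.i.d.\ and mutually independent, $d_I(i)$ is independent of $I(i)$. Hence $\mathbb{E}[d_I(i)I(i)] = \mathbb{E}[d_I(i)]\,\mathbb{E}[I(i)] = \overline{d_I}\,\mathbb{E}[I(i)]$, and I can substitute the closed form $\mathbb{E}[I(i)] = (1+\overline{\delta}-\overline{d_I}-K\overline{v})^i I_0$ furnished by Lemma~\ref{lemma: Expected c_k}.

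With these substitutions in hand, $\mathbb{E}[D(k)] = \overline{d_I} I_0 \sum_{i=0}^{k-1}(1+\overline{\delta}-\overline{d_I}-K\overline{v})^i$, a finite geometric series with ratio $r := 1+\overline{\delta}-\overline{d_I}-K\overline{v}$. The last step is the routine case split. When $r \neq 1$, i.e.\ $K\overline{v}-\overline{\delta}+\overline{d_I} \neq 0$ (in particular in the stated branch where this quantity is positive), the summation formula gives $\frac{1-r^k}{1-r}$, and noting that $1-r = K\overline{v}-\overline{\delta}+\overline{d_I}$ yields the first branch; when $r = 1$, i.e.\ $K\overline{v}-\overline{\delta}+\overline{d_I} = 0$, each term equals one and the sum is simply $k$, giving the second branch. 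The constraint $0 \leq K \leq (1-d_{\max})/v_{\max}$ is used only to guarantee, via Lemma~\ref{lemma: I_k solution under linear policy}, that $I(i)\geq 0$ so that all the expectations above are well defined; since the sum is finite, no convergence issue arises. I do not anticipate any genuine obstacle beyond stating the independence step carefully.
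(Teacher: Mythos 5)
Your proposal is correct and follows essentially the same route as the paper's proof: linearity of expectation on $D(k)=\sum_{i=0}^{k-1} d_I(i)I(i)$, the independence of $d_I(i)$ from $I(i)$ to factor the expectation, and the geometric-series case split on $K\overline{v}-\overline{\delta}+\overline{d_I}$. The only cosmetic difference is that you substitute $\mathbb{E}[I(i)]$ directly from Lemma~\ref{lemma: Expected c_k}, whereas the paper re-expands the product $\prod_{j=0}^{i-1}(1+\delta(j)-d_I(j)-Kv(j))$ and factors its expectation inline, which is the same computation.
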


\begin{proof}The proof is similar to Lemma~\ref{lemma: control of upper bounds on death}. 
	Using Lemma~\ref{lemma: I_k solution under linear policy} and the fact that~$v(k)$, $d_I(k)$, and $\delta(k)$ are independent, we have
	\begin{align*}
		\mathbb{E}[D(k)] 
		&=     \sum_{i = 0}^{k-1} 
		\mathbb{E}[d_I(i) I(i)]\\
		&=  \mathbb{E}[d_I(0) I(0)]+   I_0\sum_{i = 1}^{k-1} 
		\mathbb{E}\left[d_I(i) \prod_{j=0}^{i-1}(1+\delta(j) - d_I(j)-K v(j))\right]\\
		&=   \overline{d_I}I_0 +   I_0\sum_{i = 1}^{k-1} 
		\mathbb{E}\left[d_I(i) \prod_{j=0}^{i-1}(1+\delta(j) - d_I(j)-K v(j))\right].
	\end{align*}
	It is easy to verify that $d_I(i)$ and $(1+\delta(j) - d_I(j)-K v(j))$ are independent for all $j=0,\dots,i-1$; hence, we have
	\begin{align*}
		\mathbb{E}[D(k)] 
		&=   \overline{d_I}I_0 +   I_0\sum_{i = 1}^{k-1} 
		\mathbb{E}\left[d_I(i) \prod_{j=0}^{i-1}(1+\delta(j) - d_I(j)-K v(j))\right]\\
		&=      \overline{d_I}I_0 +   I_0\sum_{i = 1}^{k-1} 
		\mathbb{E}\left[d_I(i)\right] \mathbb{E}\left[\prod_{j=0}^{i-1}(1+\delta(j) - d_I(j)-K v(j))\right]\\
		&=   \overline{d_I}I_0 \left[1 +   \sum_{i = 1}^{k-1}  \left( 	1+\overline{\delta}-\overline{d_I}-K \overline{v} \right)^i\right].
	\end{align*}
	Using the geometric series, the equality above reduces~to
	\begin{align*}
		\mathbb{E}[D(k)] 
		&=   \overline{d_I}I_0 \left[1 +   \sum_{i = 1}^{k-1}  \left( 	1+\overline{\delta}-\overline{d_I}-K\overline{v} \right)^i\right]\\ 
		&= \begin{cases}
			\frac{\overline{d_I} I_0 \left(1-(1+\overline{\delta} -\overline{d_I}-K \overline{v})^k\right)}{K \overline{v}-\overline{\delta} +\overline{d_I}} , & K\overline{v}-\overline{\delta}+\overline{d_I} > 0;\\
			\overline{d_I} I_0 k, & K\overline{v}-\overline{\delta}+\overline{d_I} = 0,
		\end{cases}
	\end{align*}
	which is desired.
	%
\end{proof}

\subsubsection{Remark}
For $u(k)=KI(k)$, one can readily verify that
\begin{align*}
	\lim_{k\to \infty} \mathbb{E}[D(k)] 
	&= \begin{cases}
		\frac{ I_0 \overline{d_I}}{K\overline{v}-\overline{\delta}+\overline{d_I}}, & K\overline{v}-\overline{\delta}+\overline{d_I} > 0;\\
		\infty, & K\overline{v}-\overline{\delta}+\overline{d_I} = 0.
	\end{cases}
\end{align*}

\subsection{Average Epidemic Control Policy: A Revisit} With the aids of Theorem~\ref{theorem: Linear Controlled on Expected Infected Cases} and Lemma~\ref{lemma: control of expected death}, we see that if we take linear feedback policy with a pure constant gain; i.e., $u(k)=KI(k)$ and assuming that $\overline{\delta}< \frac{(1-d_{\max})\overline{v}}{v_{\max}}$ and $\frac{\overline{\delta} - \overline{d_I}}{\overline{v}} <   K  < \frac{1-\overline{d_I}}{\overline{v}}$ with $\overline{d}_I >0$, then we have: $(i)$ Expected infected cases converges to zero asymptotically. $(ii)$ There exists a constant $C_I:=I_0>0$ such that expected infected cases $\mathbb{E}[I(k)]\leq C_I$  for all~$k$. $(iii)$  There exists a constant $$C_D:= \frac{ I_0 \overline{d_I} }{K\overline{v}-\overline{\delta}+\overline{d_I}}>0$$ such that expected deceased cases $\mathbb{E}[D(k)]\leq C_D$ for all $k$. Therefore, the \textit{average epidemic control problem} is solved. In the next section to follow, we provide an illustrative example using historical COVID-19 data to demonstrate our epidemic control performance.

\section{Examples Using Historical Data:  United State}\label{Section: Examples Using Historical Data: Taiwan and United State}
We now illustrate the application of our control methodology on the epidemiological model  using historical data for year~2020
available in \href{https://ourworldindata.org/coronavirus}{https://ourworldindata.org/coronavirus}, 
which contain the number of daily confirmed cases, denoted by $c(k)$, and daily reported deaths, denoted by~$d(k)$ for $k=0,1,\dots, N-1$ for some fixed integer $N$. 

\subsection{A Simple Data-Driven Estimations}
To study the epidemic control  performance,  there are various way to estimate the uncertain parameters; e.g., one can consult ``standard" approach such as minimizing the least-square estimation error to obtain the ``optimal" parameters $\widehat{\delta}$, $\widehat{d}_I$ and $\widehat{v}$; e.g., see~\cite{calafiore2020modified, burke2020data, chowell2017fitting}. However, for the sake of simplicity, 
we now provide a simple ``mean-replacing" approach to estimate the uncertain rate of infected cases~$\delta(\cdot)$ and the death rate $d_I$ via the available data of confirmed cases and the number of reported deaths.
That is,
we begin by recalling that the deceased cases  satisfy $D(k+1)=D(k) + d_I(k) I(k)$. For $k=0,1,\dots, N-1$, given the confirmed (infected) cases $c(k)$ and~reported deaths $d(k)$ and taking $c(k):=I(k)$ and $d(k):=D(k)$, we obtain
\[
d(k+1)=d(k) + d_I(k) c(k).
\]
Then,  the estimate of death rate $d_I(k)$, call it $\widehat{d}_I$, is defined by\footnote{Of course, one can also consult an \textit{moving average} flavored estimate on the $d_I(k)$. Here, as a preliminary work, the simple ``mean-replacing" estimate is used.}
\begin{align}\label{eq: d_I_hat}
	\widehat{d}_I := \frac{1}{N}\sum_{k = 0}^{N-1}  \frac{d(k+1)-d(k)}{c(k)}.
\end{align}
Having found $\widehat{d}_I$, we can now estimate the remaining  two uncertain quantities $\delta(k)$ and $v(k)$. 
To this end, assume that the adopted epidemic control policy  is linear of the form $u(k) = K I(k)$ with~$K :=1$; i.e., we ideally assumed that  government had put all available resources  to control the pandemic.
Now, setting $c(k):=I(k)$, $\widehat{d}_I:=d_I(k)$, and replacing $v(k)$ by its mean $\overline{v}$, we obtain
\begin{align}\label{eq: estimating P(k)}
	c(k+1) = (1+\delta(k)-\widehat{d}_I-\overline{v} )c(k) 
\end{align}
With the aid of equation~(\ref{eq: estimating P(k)}), the estimate of $\delta(k)$, call it $\widehat{\delta}$, is given by
\begin{align}\label{eq: delta_hat}
	\widehat{\delta} : = \frac{1}{N}\sum_{k=0}^{N-1} \left( \frac{c(k+1)}{c(k)}-1  +\widehat{d}_I + \overline{v}\right). 
\end{align}
Having obtained the estimates $\widehat{\delta}$ and $\widehat{d}_I$, we then ready to  apply the epidemiological model and use it to compare it with the available data; see next subsections to follow.

\subsection{Control of COVID-19 Contagion in Almost Sure Sense}
Using the data from the start of the epidemy, over a horizon starting from
March 1,~2020 to September 8,~2020, 
we obtain the estimates $\widehat{\delta}$ with $\delta_{\max}:=\widehat{\delta} \approx 0.5135$ and \mbox{$\widehat{d}_I :=d_{\max} \approx 0.0449.$} 
In addition, we take $L=2$ and assume that the effectiveness rate of control $v(k)$ follows a uniform distribution with $v_{\min}=0.1$ and $v_{\max}=0.2$; i.e., $10\%$ to $20\%$ effectiveness rate on control.
Observe that
\[
\delta_{\max} > \frac{v_{\min}(1-d_{\max})}{v_{\max}} \approx 0.4775.
\]
Hence, Theorem~\ref{theorem: Linear Controlled Infected Cases} does not apply. However, we can still choose a suboptimal feedback gain discussed in Section~\ref{Remarks on Practical Considerations}; i.e.,
\begin{align*}
	K^* := K=\frac{\delta_{\max}}{ v_{\min}}\cdot 1_{\{ \delta_{\max}\leq L v_{\min}\} } + L \cdot 1_{\{\delta_{\max}> L v_{\min}\} }  =L.
\end{align*}
This means that, if the infected cases can not be well-controlled in almost sure sense, governments should put whatever they can to suppress it; here, we see a $100\cdot L\%$ feedback gain is applied. 
The epidemic control  performance is shown in Figure~\ref{fig:infectedcasesdeathsus} where the black solid lines  depict the confirmed cases $c(k)$ and reported deaths $d(k)$ from  historical COVID-19 data in the United States. 
The other thinner lines with various colors depict the epidemic control performance, in terms of $c(k)$ and $d(k)$, under the linear policy $u(k)=K c(k)$ with $K :=L$. Interestingly, the figure also tells us that if $L=2$ is possible, it may yield, in average, a lower confirmed cases. 

\begin{figure}
	\centering
	\includegraphics[width=0.7\linewidth]{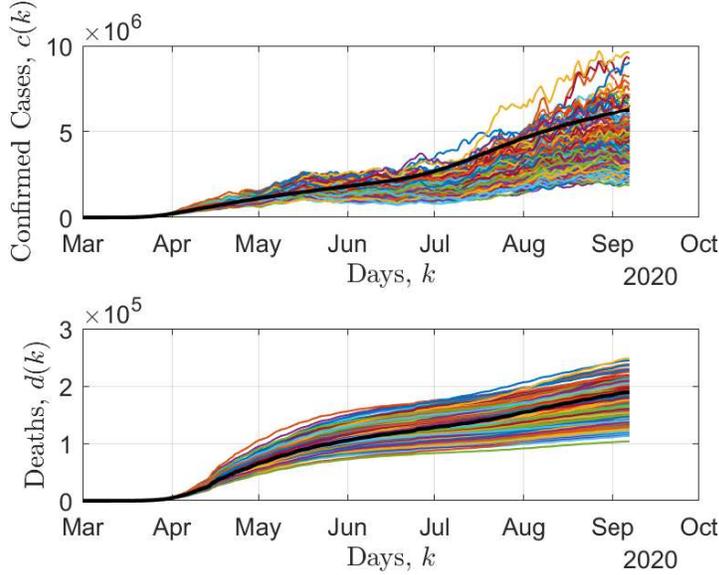}
	\caption{Almost Sure Epidemic Control   Performance:  Confirmed Infected Cases $c(k)$ (top) and Deceased Cases $d(k)$ (bottom).  }
	\label{fig:infectedcasesdeathsus}
\end{figure}


%

\subsubsection{Control of Infected  and Deceased Cases in Expected Value}
With the aid of equations~\ref{eq: d_I_hat} and \ref{eq: delta_hat}, we obtain the estimates~$\widehat{d}_I \approx 0.002$ and \mbox{$\widehat{\delta}:= \overline{\delta} \approx 0.215$.}
Similar to the previous example, we again assume that~$v(k)$ follows a uniform distribution with ${v}_{\min}=0.1$ and $v_{\max}=0.2$; hence the average  $\overline{v} = 0.15$.  
It is readily verified that our estimates satisfy
$$
\overline{\delta} < \frac{(1-d_{\max})\overline{v}}{v_{\max}} \approx 0.998.
$$
Hence, Theorem~\ref{theorem: Linear Controlled on Expected Infected Cases} applies if we take 
\[
K^* := K=\frac{\overline{\delta}}{\overline{v}}\cdot 1_{\{ \overline{\delta} \leq L\overline{v}\} } + L \cdot 1_{\{\overline{\delta} > L \overline{v}\} }  \approx 1.4245.
\]
The control policy above tells us that the government may need  to bring in extra medical resources to achieve $K  > 1$. 
The corresponding epidemic control  performance is shown in Figure~\ref{fig:infectedcasesdeathsuslogexpected2} where the confirmed cases is on the top panel and the deaths is on the bottom panel. 
In the figure, we see a downward trend occurs on the confirmed cases and the saturated deaths as time increases.
The Figure~\ref{fig:infectedcasesdeathsuslogexpected} shows, with y-axis in log-scale, the epidemic control  performance comparison  where the black solid lines are the reported confirmed cases (top) and reported deaths (bottom).

\begin{figure}
	\centering
	\includegraphics[width=0.7\linewidth]{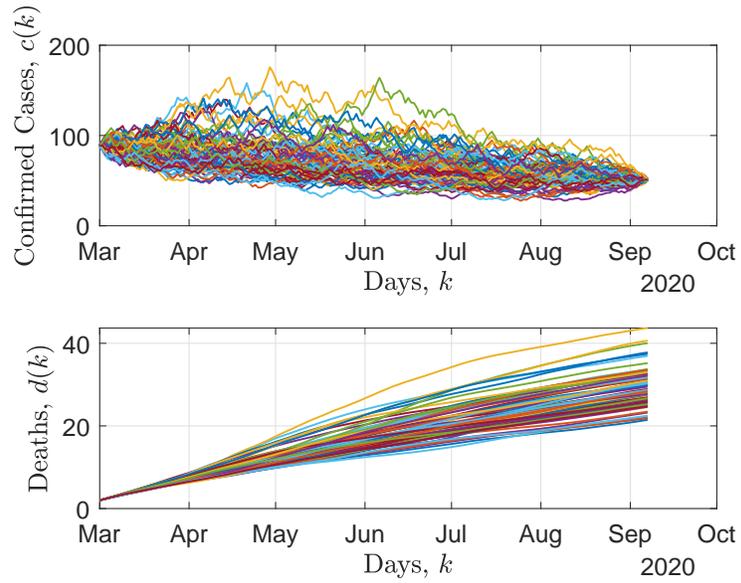}
	\caption{Average Epidemic Control Performance: Confirmed Infected Cases (top) and  Deceased Cases (bottom) under Linear Control Policy $u(k)=K^*I(k)$ with $K^* \approx 1.4245.$}
	\label{fig:infectedcasesdeathsuslogexpected2}
\end{figure}

\begin{figure}
	\centering
	\includegraphics[width=0.7\linewidth]{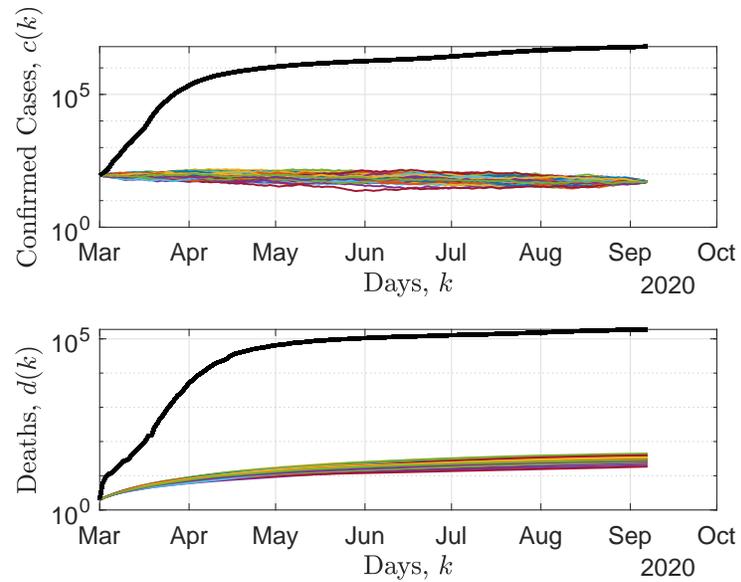}
	\caption{Epidemic Control  Performance Comparison (with y-axis in log-scale). }
	\label{fig:infectedcasesdeathsuslogexpected}
\end{figure}

\section{Concluding Remarks and Future Work} \label{Section: Concluding Remarks}
This preliminary work has been done in the urgency of the ongoing
COVID-19 pandemic, with the mind of providing a simple
yet explainable epidemiological model with a rigorous study on the effectiveness of a linear epidemic control policy class.
Consistent with the existing literature on epidemic modeling and control,
this paper considers a modified stochastic SIRD-based model. 
We analyzed the model and considered two epidemy control problems: One is the almost sure epidemic control  problem and the other is the average epidemic control  problem.
Then,  for both of two problems, with linear control policy, we show  sufficient conditions on feedback gain so that the epidemy is deemed to be  ``well-controlled" in the sense that infected cases goes down to zero asymptotically, and both infected cases and deceased cases are upper bounded uniformly.
Subsequently, we provide a simple data-driven parameter estimations and show some promising numerical results using historical COVID-19 data in the United States. 
Based on our work to date, two important directions immediately present themselves
for future work which described in the next subsections to follow.

\subsection{Generalization to Multi-Population Epidemic Control}
It is possible to extend our analysis to involve multi-population epidemics. To illustrate this, below we consider only the infected cases dynamics. Fix $m$ populations, then for each population $i=1,\dots,m$, we write
\begin{align}\label{eq: infected cases dynamics (Multi_Area)}
	I_i(k+1) &= (1+\delta(k)-d_{I,i}(k))I_i(k)- u_i(k)v_i(k) 
\end{align}
with limited medical resource $$u(k)=\sum_{i=1}^m u_i(k) \leq u_{\max}$$ for some $u_{\max}>0$
and the overall infected cases are 
$$
I(k)=\sum_{i=1}^{m}I_i(k).
$$
It is also possible to consider the case where the control policy is with delay effect; i.e., $u(k-\tilde{d})$ with delay time $\tilde{d}$; see also~\cite{hsieh2019positive} for a discussion on handling the time delay considerations in a class of positive finance systems. 

\subsection{Economic Budgets Consumption} Another possible research direction would be to take the economic budget into play. As seen in Remark~\ref{Remarks on Practical Considerations}, the idea of minimizing the expenditure induced by implementing control policy may lead to a version of ``optimal" choice for feedback gain $K$. In particular, one can even consider the budget dynamics and carry out the optimization. Specifically, let $B(k)$ be the \textit{available medical budget} at the $k$th day which satisfies 
\[
B(k+1) = B(k) + p(k)u(k)
\]
where $p(k)$ is the price to pay per control policy at each stage, which can be modeled as a random variables with finite support. Then,
it is readily verified that 
\[
B(k) =  B_0 + \sum_{i = 0}^{k-1} p(i) u(i)
\]
for $k \geq 1.$ From here, there are many possible directions to pursue. For example, one can consider a optimization problem which minimizes the expected budget cost and infected cases; i.e., one might seek to find a sequence of $K $ which solve
\[
\inf_{K }  \mathbb{E}[\beta B(k) + \gamma I(k)]
\]
for some constants $\beta$ and $\gamma$.

\bibliographystyle{siam} 

\bibliography{refs}
\end{document}